\newtheorem{thm}{Theorem}[section]
\newtheorem{lem}[thm]{Lemma}
\newtheorem{prop}[thm]{Proposition}
\theoremstyle{definition}
\theoremstyle{definition}
\theoremstyle{definition}
\newtheorem{ex}[thm]{Example}\theoremstyle{definition}
\newtheorem{defn}[thm]{Definition}
\newtheorem{rem}[thm]{Remark} \numberwithin{equation}{section}
\newcommand{\A}{\mathcal{A}}
\newcommand{\C}{\mathbb{C}}
\newcommand{\D}{\mathcal{D}}
\newcommand{\F}{\mathcal{F}}
\newcommand{\Hc}{\mathcal{H}}
\newcommand{\R}{\mathbb R}
\newcommand{\E}{\mathbb E}
\newcommand{\T}{\mathbb T}
\newcommand{\N}{\mathbb N}
\def\P{\mathbb P}
\def\1{\mathbb I}
\def\a{\alpha}
\def\b{\beta}
\def\e{\epsilon}
\def\g{\gamma}
\def\l{\lambda}
\def\o{\omega}
\def\eps{\varepsilon}
\def\ov{\overline}
\begin{document}
\title[Large time behavior of weakly coupled systems]{Large time behavior of weakly coupled systems of first-order
Hamilton-Jacobi equations}
\author{Fabio Camilli,  Olivier Ley, Paola Loreti \and Vinh Duc Nguyen}
\address{Dipartimento di Scienze di Base e Applicate per l'Ingegneria,  ``Sapienza" Universit{\`a}  di Roma,
 00161 Roma, Italy}
\email{e-mail:camilli@dmmm.uniroma1.it}
\address{IRMAR, INSA de Rennes, 35708 Rennes, France} \email{olivier.ley@insa-rennes.fr}
\address{Dipartimento di Scienze di Base e Applicate per l'Ingegneria,  ``Sapienza" Universit{\`a}  di Roma,
 00161 Roma, Italy}
\email{e-mail:loreti@dmmm.uniroma1.it}
\address{IRMAR, INSA de Rennes, 35708 Rennes, France} \email{vinh.nguyen@insa-rennes.fr}
%\date{\today}
\begin{abstract}
We show a large time behavior result for class
of  weakly coupled systems of first-order
Hamilton-Jacobi equations in the periodic setting.
We use a PDE approach to extend the convergence result
proved by Namah and Roquejoffre (1999) in the scalar case.
Our proof is based on new comparison, existence and regularity results
for systems. An interpretation of the solution of the system
in terms of an optimal control problem with switching is given.
\end{abstract}
\subjclass[2000]{Primary 49L25; Secondary 35F30, 35B25, 58J37}
\keywords{Hamilton-Jacobi equations, weakly coupled system, large time behavior, critical value}

%\thanks{O.L. is partially supported by the project ANR BLANC07-3 187245, ``Hamilton-Jacobi and Weak KAM Theory''.}

\maketitle

%%%%%%%%%%%%%%%%%%%%%%%%%%%%%%%%%%%%%%%%%%%%%%%%%%%%%%
%%%%%%%%%%%%%%%%%%%%%%%%%%%%%%%%%%%%%%%%%%%%%%%%%%%%%%
\section{introduction}

The
aim of this paper is to study the large time behavior of the system of Hamilton-Jacobi equations
\begin{eqnarray}\label{HJEi}
&& \left\{
\begin{array}{ll}
\displaystyle\frac{\partial u_i}{\partial t}
+  H_i(x, Du_i )+\sum_{j=1}^{m}d_{ij}(x)u_j=0 &
(x,t)\in \T^N\times (0,+\infty),\\[3mm]
u_i(x,0)=u_{0i}(x) & x\in\T^N,
\end{array}
\right.
 i=1,\dots,m,
\end{eqnarray}
where $\T^N$ is the $N$-dimensional torus. The Hamiltonians $H_i(x,p)$ are of eikonal type and
the coupling is linear and monotone, i.e.,
\begin{eqnarray}\label{coupling-intro}
&& d_{ii}(x)\geq 0, \quad d_{ij}(x)\leq 0 \ {\rm for} \ i\not= j\quad {\rm and}  \quad
\sum_{j=1}^{m}d_{ij}(x)\geq 0, \qquad \text{ for all } x\in\T^N.
\end{eqnarray}

The corresponding problem for the Hamilton-Jacobi equation
\begin{equation}\label{HJi}
    \frac{\partial u}{\partial t}+  H(x, Du )=0
\end{equation}
has been extensively investigated using both PDE methods, see
Namah and Roquejoffre \cite{nr99}, Barles and Souganidis \cite{bs00}, and
a dynamical approach: Fathi \cite{fathi98, fathi10}, Roquejoffre \cite{roquejoffre01},
Davini and Siconolfi \cite{ds06}. Some of these results have been also extended 
beyond the periodic setting: Barles and Roquejoffre \cite{br06}, Ishii \cite{ishii08},
Ichihara and Ishii \cite{ii09}
and for problems with periodic boundary conditions: see for instance 
Mitake~\cite{mitake08a, mitake08b, mitake09}. We refer also the readers to
Ishii~\cite{ishii06, ishii09} for an overview.
\smallskip

In these works, one of the main result is 
that there exists a constant $c\in\R, $
the so-called critical value or ergodic constant, 
and a solution $v$ of the stationary equation
\begin{equation}\label{sHJi}
   H(x, Du )=c
\end{equation}
such that
\begin{eqnarray}\label{cv-scalaire}
u(x,t)+ct\mathop{\to} v(x) \quad \text{ uniformly as } t\to +\infty.
\end{eqnarray}
There are several equivalent characterizations of the critical value (see \cite{bs00}, \cite{fathi10}), for example $c$ is the unique constant such that
\begin{eqnarray}\label{cv-to-erg}
\frac{u(x,t)}{t}\to -c \quad \text{ uniformly as } t\to +\infty,
\end{eqnarray}
or
\begin{equation}\label{eci}
c=\min\{a\in\R:\, H(x,Du)=a\, \text{has a subsolution}\}.
\end{equation}
While $c$ is uniquely determined, the main difficulty in proving
a result like~\eqref{cv-scalaire} is that~\eqref{sHJi} does not admit
a unique solution (at least, the equation is invariant by addition
of constants).
\smallskip

To our knowledge, there are not only  no results of asymptotic 
type for the system \eqref{HJEi}, but also
the study of corresponding ergodic problem
\begin{equation}\label{statioHJi}
     H_i(x, Dv_i(x) )+\sum_{j=1}^{m}d_{ij}(x)v_j(x)=c_i
\qquad x\in\T^N,
\end{equation}
is not  well understood (see \cite{cl08} for some preliminary results).
\smallskip

With the  aim of understanding if some convergence like~\eqref{cv-scalaire}
holds in the case of systems, we focus on the setting of Namah and Roquejoffre \cite{nr99}.
 Let us start by recalling the main result
of \cite{nr99}. It takes place in the periodic setting
and they assume that $H$ in \eqref{HJi} is continuous and of the type
\begin{eqnarray*}
H(x,p)=F(x,p)-f(x) \qquad x\in\T^N, p\in\R^N,
\end{eqnarray*}
where $F$ is coercive and convex with respect to $p.$
Besides, $F(x,p)\geq F(x,0)=0.$ The function $f$ is continuous and
satisfies
\begin{eqnarray}\label{F-scalaire}
f\geq 0 \quad {\rm and} \quad \F_{\rm scalar}
=\{x\in\T^N:\,f(x)=0\}\not=\emptyset.
\end{eqnarray}
It is simple to see by  the characterization in \eqref{eci} that $c=0$. 
Moreover by classical results in viscosity solution theory, $\F_{\rm scalar}$ is a uniqueness set for \eqref{sHJi}, i.e. the solution of \eqref{sHJi} is uniquely characterized by its value on this set. The coercitivity of the Hamiltonian  provides the compactness of the functions $u(\cdot,t)$ for $t>0$. Then employing the semi-relaxed limits, one can pass to the
limit and obtain the convergence result if one can prove the convergence
of $u(\cdot,t)$ on the set $\F_{\rm scalar}$. This latter result follows from the observation
that, since $F(x,p)\ge 0,$ a solution of \eqref{HJi} satisfies
\begin{eqnarray*}
\frac{\partial u}{\partial t}\leq 0 \qquad
{\rm on} \ \F_{\rm scalar}\times (0,+\infty).
\end{eqnarray*}
Hence $u(\cdot,t)$ is nonincreasing and therefore converges uniformly on $\F_{\rm scalar}$ and one concludes that it converges  in all $\T^N$.
\smallskip

Our purpose is to reproduce the previous proof and therefore we assume that
the Hamiltonians $H_i$'s in \eqref{HJEi} are as in \cite{nr99}
and the coupling matrix $D(x)=(d_{ij})_{1\leq i,j\leq m}$
satisfies~\eqref{coupling-intro}. Under these assumptions,~\eqref{HJEi}
has a unique viscosity solution in $\T^N\times [0,+\infty)$ for any
continuous initial data $u_0:\T^N\to \R^m.$
\smallskip

For the simplicity of the exposition in this introduction, we assume
moreover that
\begin{eqnarray}\label{hyp-simple-D}
\sum_{j=1}^m d_{ij}(x)=0,\ i=1,\dots,m, \qquad
\sum_{i=1}^m d_{ij}(x)=0,\ j=1,\dots,m,
\end{eqnarray}
for $x\in\T^N$.
Note that this assumption is not necessary. We can avoid it by using
the results of Section~\ref{s2}.
\smallskip

To continue, we have to understand what plays the role of $\F_{\rm scalar}$
in~\eqref{F-scalaire} for systems.
In the scalar case,
there are important interpretations of the convergence~\eqref{cv-scalaire}
in terms of dynamical systems or optimal control theory    \cite{fathi10}.
Indeed,~\eqref{cv-scalaire} means that the optimal trajectories
of the related control problem are attracted by the set $\F_{\rm scalar}$
where the running cost $f$ is 0.\smallskip

 In the case of systems,
the solutions $u_i$'s of~\eqref{HJEi}
are value functions of a piecewise deterministic optimal control
problem with random switchings. The switchings are governed by a continuous in time
Markov process with state space $\{1,\dots, m\}$ and
probability transitions from the mode $i$ to $j$ given by
$\gamma_{ij}=-d_{ij}$ for $i\not= j.$ See Section~\ref{sec:control}
for further details.
A natural assumption to obtain the convergence of the value functions
$u_i$'s is to require that all the running costs $f_i$'s vanish at least at some
common point. It suggests that the optimal strategy consists in driving
the trajectories to such a point where the running costs are 0 whatever
the switchings will be (note that the coercivity of the Hamiltonian
implies the controllability of the trajectories). So we introduce
and assume that
\begin{eqnarray}\label{intro-F}
\F:=\bigcap_{i=1}^m \{x\in \T^N: f_i(x)=0\}
=\{x\in \T^N: \sum_{i=1}^m f_i(x)=0\}
\not= \emptyset
\end{eqnarray}
(recall that the $f_i\geq 0$).
We need an additional assumption on the coupling matrix,
namely that $D(x)$ is irreducible, see Definition~\ref{def-irr}.
Roughly speaking, it means that the coupling is not trivial and
the system cannot be reduced to several subsystems of lower dimensions.
\smallskip

The next step is to understand well the limit problem \eqref{statioHJi}.
Under the previous  assumptions,
$\F$ appears to be a uniqueness set for the stationary system
\eqref{statioHJi} with $c_i=0$ (as in~\cite{nr99}, we will prove that
our assumptions imply that the critical value is $c=0$).
More precisely, on this
set it is sufficient to control the value of the sum
$v_1+\dots+v_m$ of a solution to \eqref{statioHJi}, see Theorem~\ref{confr}, a  condition which seems to be new
with respect to standard assumptions on weakly coupled systems (\cite{el91}, \cite{ik91}).
Let us mention that, when~\eqref{hyp-simple-D} does not hold, we need to replace
$\F$ with another set $\A,$ see~\eqref{F}-\eqref{aubry}.
\smallskip

We then solve the so-called ergodic problem, which consists
in finding a couple $(c,v)\in \R^m\times C(\T^N;\R^m)$
of solutions to~\eqref{statioHJi}.
The motivation comes from the
formal expansion suggested by the convergence result
of type~\eqref{cv-scalaire} we are expecting for~\eqref{HJEi}.
Plugging $u_\infty(x)-ct\approx u(x,t)$ in~\eqref{HJEi}, we obtain
that $(c,u_\infty)$ should be a solution of
$$
H_i(x, D(u_\infty)_i)+\sum_{j=1}^m d_{ij}(x)(u_\infty)_j (x)
-t \sum_{j=1}^m d_{ij}(x)c_j= c_i,
$$
for all $i,$ with $c\in {\rm ker}(D(x))$ to cancel the term in $t.$
In Theorem~\ref{thm-ergodic}, we prove the existence of a solution
where $c\in {\rm ker}(D(x))$ and $v$ Lipschitz continuous on $\T^N.$
The Lipschitz continuity of $v$ is an easy consequence of the coercivity
of the $F_i$'s. Under the assumptions~\eqref{hyp-simple-D},
it is easy to see that
${\rm ker}(D(x))$ reduces to the line spanned by $(1,\dots,1)$ so
$c=(c_1,\dots,c_1).$ Moreover, due to~\eqref{intro-F}, we obtain
that $c=(0,\dots,0)$ is uniquely determined.
\smallskip

At this step, it is worth noticing that we can solve the ergodic problem
in a more general setting (see Theorem~\ref{thm-ergodic-bis}), in particular
without assuming~\eqref{intro-F}. We obtain the following condition on
$c,$
\begin{eqnarray*}
c=(c_1,\dots, c_1)
\quad {\rm and} \quad
\sum_{i=1}^m \mathop{\rm min}_{\T^N}f_i(x)
\leq -c_1 \leq
\mathop{\rm min}_{\T^N} \sum_{i=1}^m f_i(x).
\end{eqnarray*}
This gives again an indication that assuming $\F\not=\emptyset$
is a first natural case to consider, since, in this case, inequalities
are replaced with equalities in the above formula and the ergodic constant
is univocally defined.
\smallskip

We are now in force to consider the large time result (Theorem~\ref{LTB}).
The coercivity of the Hamiltonians and the existence of a solution
to the ergodic problem
give the compactness of the sequences $u_i$'s in $W^{1,\infty}(\T^N\times [0,+\infty)).$
An easy consequence is the
convergence~\eqref{cv-to-erg} for all $i$ to 0 (since $c=0$ in our case).
To mimic the proof of~\cite{nr99}, we need to prove the convergence
of the $u_i$'s on $\F.$ This is the most difficult part of the work.
\smallskip

Indeed, by summing the equations~\eqref{HJEi} for $i=1,\dots ,m,$ we obtain
\begin{eqnarray}\label{somm-eqs}
\sum_{i=1}^m \frac{\partial u_i}{\partial t}
+\sum_{i=1}^m H_i(x,Du_i)
+\sum_{i,j=1}^m d_{ij}u_j =0.
\end{eqnarray}
Using that $H_i(x,Du_i)\geq 0$ on $\F$ and~\eqref{hyp-simple-D},
we obtain easily that
\begin{eqnarray}\label{deriv-somme-u}
\frac{\partial}{\partial t}\sum_{i=1}^m u_i(x,t)\leq 0
\qquad {\rm on} \ \F
\end{eqnarray}
and therefore
$t\mapsto (u_1+\dots+u_m)(\cdot,t)$ is nonincreasing  and converges
uniformly as $t\to +\infty$ on $\F.$
But this is not enough to prove the convergence of {\it each}
$u_i$ on $\F.$ 
\smallskip

To overcome this difficulty, we use some ideas 
of~\cite{bs00}. We choose a subsequence $t_n\to +\infty$
such that $u(\cdot, t_n+\cdot)$ converges uniformly
to some $w(\cdot,\cdot)$ in $W^{1,\infty}(\T^N\times [0,+\infty)).$
By stability of the viscosity solutions, $w$ is still solution
of~\eqref{HJEi} and we earn something: now, $t\mapsto (w_1+\dots+w_m)(\cdot,t)$ is constant
on $\F.$ Therefore~\eqref{deriv-somme-u} holds for the $w_i$'s
with an equality. It follows from~\eqref{somm-eqs} that
\begin{eqnarray*}
\sum_{i=1}^m H_i(x,Dw_i)=0 \qquad {\rm on } \ \F\times (0,+\infty).
\end{eqnarray*}
Since $H_i=F_i\geq 0$ on $\F,$ we infer that 
$H_i(x,Dw_i)=0$ for all $1\leq i\leq m.$
Therefore, the system~\eqref{HJEi} reduces to a linear differential
system 
\begin{eqnarray*}
\frac{\partial w}{\partial t}(x,t)+D(x)w(x,t)=0
\qquad t\geq 0,
\end{eqnarray*}
for every $x\in\F.$ Using that $D(x)$ 
satisfies~\eqref{coupling-intro},~\eqref{hyp-simple-D} and
is irreducible, we can prove the convergence of each $w_i(\cdot,t)$ on $\F$
and then on $\T^N$ by applying the comparison Theorem~\ref{confr}.
The conclusion follows by proving that
$u(\cdot,t)$ converges to the same limit as $w(\cdot,t).$
\smallskip

The paper is organized as follows. In Section \ref{s2}, we study some
properties of the coupling matrix $D$ without assumption~\eqref{hyp-simple-D}.
Section \ref{s3} is devoted to study existence and  uniqueness of the stationary problem. In Section \ref{s4} we solved the ergodic problem. The convergence
result is proved in
Section \ref{s5}. Finally, in Section \ref{sec:control}, we give a control theoretic interpretation of the problem.
\smallskip

We learnt recently that Mitake and Tran~\cite{mt11} studied systems
of two equations ($m=2$) both in our setting (see Remark~\ref{rmq-thm-cv} (3))
and also in some particular cases related to~\cite{bs00}.
\smallskip

\noindent{\sc Notation.} If $p=(p_1,\dots,p_m)$ is a vector in $\R^m,$
then $p\geq 0$ (respectively $p>0$) means that $p_i\geq 0$ (respectively
$p_i>0$) for $1\leq i\leq m.$
\smallskip

\noindent{\sc Acknowledgments.} O.L. is partially supported by the project ANR BLANC07-3 187245, ``Hamilton-Jacobi and Weak KAM Theory''. We would like to
thank G.~Barles and P.~Cardaliaguet who helped us to improve the
first version of this work and M.~Briane, L.~Herv\'e
and J.~Ledoux for useful references and suggestions.

%%%%%%%%%%%%%%%%%%%%%%%%%%%%%%%%%%%%%%%%%%%%%%%%%%%%%%%%%%%%%%%%%%%%%%%%%%%%%%%%%%%%%%%%%
%%%%%%%%%%%%%%%%%%%%%%%%%%%%%%%%%%%%%%%%%%%%%%%%%%%%%%%%%%%%%%%%%%%%%%%%%%%%%%%%%%%%%%%%%
%%%%%%%%%%%%%%%%%%%%%%%%%%%%%%%%%%%%%%%%%%%%%%%%%%%%%%%%%%%%%%%%%%%%%%%%%%%%%%%%%%%%%%%%%
%%%%%%%%%%%%%%%%%%%%%%%%%%%%%%%%%%%%%%%%%%%%%%%%%%%%%%%%%%%%%%%%%%%%%%%%%%%%%%%%%%%%%%%%%
\section{Preliminaries on coupling matrices}
\label{s2}
We consider the matrix $D(x)=(d_{ij}(x))_{1\leq i,j\leq m}$ and call it a
{\it coupling matrix} for the systems~\eqref{HJEi} and~\eqref{statioHJi} .
We assume that the coupling matrices satisfy the following standard assumptions (see \cite{ishii92}, \cite{ik91})
\begin{equation}\label{H3}
\begin{split}
&\text{$d_{ij}:\T^N\to \R$ are continuous and, for all $x\in\T^N,$}  \\
& d_{ii}(x)\geq 0, \quad d_{ij}(x)\leq 0 \ {\rm for} \ i\not= j\quad {\rm and}  \quad
\sum_{j=1}^{m}d_{ij}(x)\geq 0.
\end{split}
\end{equation}
We introduce some conditions on the matrix  $D$ we will be interested in:

%%%
\begin{defn}\label{def-irr}
We say that $D$ is
\begin{itemize}
\item[(i)]  a \emph{$M$-matrix} if
\begin{eqnarray*}
&& D=sI-B, \quad\text{for some } s>0, \quad B=(b_{ij})_{1\leq i,j\leq m}, \ b_{ij}\geq 0,\\
&&\text{with } s\geq \rho(B) \text{ and } \rho(B)  \text{ the spectral radius of } B,
\end{eqnarray*}
($I$ is the identity matrix).
\item[(ii)]  {\em irreducible} if,
for all subset $\mathcal{I}\varsubsetneq \{1,\cdots ,m\}$ then there exists $i\in \mathcal{I}$ and  $j\not\in  \mathcal{I}$
 such that $d_{ij}\not= 0$.
\end{itemize}
\end{defn}
%%%
\noindent

\begin{lem}\label{mmat}
If $D$ satisfies~\eqref{H3}, then it is a $M$-matrix for any $x$.
\end{lem}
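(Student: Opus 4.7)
The plan is to check the definition of $M$-matrix directly by choosing an appropriate scalar $s$ for each fixed $x\in\T^N$. First I would fix $x$ and set
$s := \max\bigl\{1,\, \max_{1\leq i\leq m} d_{ii}(x)\bigr\}$,
so in particular $s>0$ and $s\geq d_{ii}(x)$ for every $i$. Define $B(x) := sI - D(x)$, whose entries are $b_{ii}(x)=s-d_{ii}(x)$ on the diagonal and $b_{ij}(x)=-d_{ij}(x)$ off-diagonal. The assumptions in \eqref{H3} together with the choice of $s$ give $b_{ii}(x)\geq 0$ and $b_{ij}(x)\geq 0$, so $B(x)$ is a nonnegative matrix, and by construction $D(x)=sI-B(x)$.

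The remaining point is the spectral bound $s\geq \rho(B(x))$. For this I would invoke the standard fact that the spectral radius of a nonnegative matrix is bounded above by its maximal row sum (equivalently, Gershgorin's theorem applied to a nonnegative matrix and centered at the diagonal). Computing the $i$-th row sum of $B(x)$ gives
\[
\sum_{j=1}^{m} b_{ij}(x) \;=\; s - d_{ii}(x) - \sum_{j\neq i} d_{ij}(x) \;=\; s - \sum_{j=1}^m d_{ij}(x) \;\leq\; s,
\]
where the last inequality uses precisely the row-sum condition $\sum_j d_{ij}(x)\geq 0$ from \eqref{H3}. Therefore $\rho(B(x)) \leq \max_i \sum_j b_{ij}(x) \leq s$, as required by Definition~\ref{def-irr}(i).

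There is really no main obstacle here: once the correct $s$ is chosen, the three bullets of the $M$-matrix definition (positivity of $s$, nonnegativity of $B$, and $s\geq\rho(B)$) follow in turn from the three parts of \eqref{H3}. The only non-trivial input is the row-sum upper bound on $\rho(B)$, which is classical (Perron-Frobenius/Gershgorin) and requires no assumption beyond $B\geq 0$. The argument is pointwise in $x$, so no continuity or regularity of the $d_{ij}$ enters the conclusion.
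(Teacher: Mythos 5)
Your proof is correct, and it takes a cleaner route than the paper's. The paper chooses $s=\max_k d_{kk}(x)$ (treating $D=0$ separately so that $s>0$), writes $D=sI-B$ with $B\geq 0$, and then invokes the Perron--Frobenius theorem to obtain a nonnegative eigenvector $p$ with $Bp=\rho(B)p$; evaluating $Dp=(s-\rho(B))p$ at an index where $p$ is maximal and using $d_{ij}\leq 0$ for $i\neq j$ together with $\sum_j d_{kj}\geq 0$ yields $s\geq\rho(B)$. You instead take $s=\max\{1,\max_i d_{ii}(x)\}$ --- a small but genuine improvement, since it makes $s>0$ automatic and removes the $D=0$ case split --- and replace the Perron eigenvector argument by the elementary bound $\rho(B)\leq\|B\|_\infty=\max_i\sum_j b_{ij}$ for $B\geq 0$, computing the $i$-th row sum of $B$ as $s-\sum_j d_{ij}(x)\leq s$. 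This avoids appealing to the existence of a Perron eigenvector entirely and uses only the operator-norm (or Gershgorin) estimate of the spectral radius; the trade-off is that the paper's eigenvector manipulation is the one that gets reused and extended in Lemma~\ref{lem-rang}, so keeping Perron--Frobenius in play there has some expository economy, but for Lemma~\ref{mmat} in isolation your argument is both shorter and more self-contained.
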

%%%
\begin{proof}
If $D=0,$ there is nothing to prove. Otherwise,
$s:={\rm max}_{1\leq k\leq m}d_{kk} >0.$ Then, we can write $D=sI-B$ with
$B= (b_{ij})$ with $b_{ii}=s-d_{ii}$ and $b_{ij}=-d_{ij}$ for $i\not=j.$
Since $B\geq 0,$ by Perron-Froebenius theorem, the spectral radius $\rho(B)$
of $B$ is an eigenvalue and there exists a nonnegative eigenvector $p$
such that $Bp=\rho(B)p.$ Therefore $Dp=(s-\rho(B))p.$
Let $p_k={\rm max}_{i} p_i.$ Since $B\not=0,$ we have $p_k>0$
and, using that $d_{ij}\leq 0$ for $i\not=j,$
\begin{eqnarray*}
0\leq \left(\sum_{j=1}^m d_{kj}\right)p_k\leq \sum_{j=1}^m d_{kj}p_j=(s-\rho(B))p_k
\end{eqnarray*}
and we conclude that $s\geq \rho(B).$
\end{proof}
%%%%%%%%%%%%%%%%%%%%%%
Let us give a characterization of an irreducible matrix.
%%%
\begin{lem}
$D$ is irreducible if and only if:
for all $i,j\in  \{1,\cdots ,m\},$ there exists
$n\in\N$ and a sequence $i_0=i, i_1, i_2, \cdots , i_n=j$
such that $d_{i_{l-1} i_{l}}\not= 0$ for all $1\leq l\leq n$
(in this case we  say that there exists a chain between $i$ and $j$).
\end{lem}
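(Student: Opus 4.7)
My plan is to prove the two implications separately, in each case exploiting a minimality/maximality argument to locate an edge that crosses the boundary of a forbidden subset.

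For the direction ($\Leftarrow$), I assume that chains exist between any two indices and I want to deduce irreducibility. Take any $\mathcal{I} \varsubsetneq \{1,\ldots,m\}$, pick any $i\in \mathcal{I}$ and any $j\notin \mathcal{I}$, and look at a chain $i=i_0,i_1,\ldots,i_n=j$ with $d_{i_{l-1}i_l}\neq 0$ for every $l$. Since the chain starts in $\mathcal{I}$ and ends outside of it, there must be a first index $l$ such that $i_{l-1}\in \mathcal{I}$ while $i_l\notin \mathcal{I}$. This index furnishes the required nonzero off-diagonal entry crossing the partition, giving irreducibility.

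For the direction ($\Rightarrow$), I fix $i\in\{1,\ldots,m\}$ and set
\begin{eqnarray*}
\mathcal{I}_i := \{i\} \cup \{j\in\{1,\ldots,m\}: \text{there is a chain from } i \text{ to } j\}.
\end{eqnarray*}
The goal is to show $\mathcal{I}_i=\{1,\ldots,m\}$, for then any $j\neq i$ admits a chain from $i$ as desired (and for $j=i$ one may take the trivial chain $i_0=i_1=\cdots=i$, or simply treat $j=i$ as the vacuous case). Suppose for contradiction that $\mathcal{I}_i\varsubsetneq \{1,\ldots,m\}$. Applying the irreducibility assumption to $\mathcal{I}=\mathcal{I}_i$ yields $k\in \mathcal{I}_i$ and $l\notin \mathcal{I}_i$ with $d_{kl}\neq 0$. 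But then, concatenating any chain from $i$ to $k$ (or taking the empty chain if $k=i$) with the one-step move $k\to l$ gives a chain from $i$ to $l$, so $l\in \mathcal{I}_i$, a contradiction.

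I expect the argument to be essentially combinatorial and short; the only subtle point is the bookkeeping around the ``first crossing'' of the chain in the easy direction and around the trivial case $k=i$ in the other direction. No analytic input is needed: the hypotheses~\eqref{H3} or~\eqref{coupling-intro} play no role here, since the characterization is purely about the zero pattern of $D$.
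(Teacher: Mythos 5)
Your proof is correct and follows essentially the same route as the paper's: both directions rely on the set $\mathcal{I}_i$ of indices reachable from $i$ by a chain (for $\Rightarrow$) and on locating the first index along a chain that exits $\mathcal{I}$ (for $\Leftarrow$). The only difference is cosmetic: where the paper declares it ``obvious'' that irreducibility forces $\mathcal{I}_i=\{1,\dots,m\}$, you supply the one-line contradiction argument, which is exactly what lies behind that remark.
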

%%%
\begin{proof}
Let $i\in \{1,\cdots ,m\}$ and $\mathcal{I}_i$ be the subset
of  $\{1,\cdots ,m\}$ containing all the chains starting from $i.$
It is obvious that, if $D$ is irreducible, then
$\mathcal{I}_i= \{1,\cdots ,m\}.$ Conversely,
let $\mathcal{I}\varsubsetneq \{1,\cdots ,m\},$ $i\in \mathcal{I}$
and $j\not\in \mathcal{I}.$ By assumption, there exists a chain
$i_0=i, i_1, \cdots , i_n=j$ between $i$ and $j.$ Let $1\leq \bar{l}\leq n$
be the smallest $l$ such that $l\not\in \mathcal{I}.$ Then
$d_{i_{ \bar{l}-1 } i_{\bar{l}} }\not= 0.$
\end{proof}

%%%%%%
\begin{ex}
If $d_{ij}\not=0$ for all $i,j$ then
it is obvious that $D$ is irreducible. In particular,
when $D$ satisfies \eqref{H3} and, in addition, $d_{ij}<0$ for $i\not= j,$
then it is irreducible. The matrices
\begin{eqnarray*}
\left[
\begin{array}{ccc}
 1 & -1 & 0 \\
 0 & 1 & -1 \\
 -1 & 0 & 1
\end{array}
\right]
\quad {\rm and} \quad
\left[
\begin{array}{cc}
  \a & -\a \\
  -\b & \b\ \\
\end{array}
\right] \ \ \text{with } \a, \b >0
\end{eqnarray*}
are irreducible and satisfy \eqref{H3}.
On the contrary,
\begin{eqnarray}
\label{nonirred}
\left[
\begin{array}{cccc}
 1 & -1 & 0 & 0 \\
 -1 & 1 & 0 & 0 \\
 0 & 0 & 1 & -1 \\
 0 & 0 & -1 & 1
\end{array}
\right]
\quad {\rm and} \quad
\left[\begin{array}{cc}
  0 & 0 \\
  -1 & 1\ \\
\end{array}\right]
\end{eqnarray}
satisfy \eqref{H3} but are not irreducible.
\end{ex}
%%%%%%

%%%%%%%
\begin{rem} For $M$-matrices and irreducible $M$-matrices, see
\cite[Chapter 6]{bp94} and~\cite{seneta81}.
These assumptions are natural when studying coupled systems
of partial differential equations like~\eqref{statioHJ} and~\eqref{HJE}.
To expect some maximum principles, one usually needs $M$-matrices
and, roughly speaking, when the coupling matrix is irreducible, it means
that the equations are coupled in a non trivial way.
For instance, in the cases~\eqref{nonirred},
the system of equations is decoupled (into two subsets of equations) and
is triangular, respectively. We refer to the work of Busca and Sirakov~\cite{bs04}
and the references therein for details.
\end{rem}
%%%%%%%

%%%%%%%
\begin{lem} \label{lem-rang}
Let $E_1, E_2\subset\T^N$ be closed subsets, let $E=E_1\cap E_2,$
suppose that  $D(x)=(d_{ij}(x))_{1\leq i,j\leq m}$ is such that~\eqref{H3} holds
and
\begin{eqnarray}
&& \sum_{j=1}^m d_{ij}(x) =0 \text{ for } x\in E_1,
\qquad i=1,\dots,m. \label{degenerate}\\
&& D(x) \text{ is irreducible for } x\in E_2,\label{Dcst-irr}
\end{eqnarray}
\begin{itemize}
\item[(i)]
For all $x\in E,$ $D(x)$
is degenerate of rank $m-1,$
${\rm ker}(D(x))={\rm span}\{(1, \cdots, 1)\}$
and the real part of each nonzero complex eigenvalue of
$D(x)$ is positive. Moreover
there exists a positive continuous function
$\Lambda=(\Lambda_1,\dots,\Lambda_m): E\to\R^m$ such that
$\Lambda(x) >0$ and $D(x)^T\Lambda(x) =0$ for all $x\in E.$

\item[(ii)]
In the set $E_2\setminus E_1,$
where one only has $\displaystyle\sum_{j=1}^m d_{ij}(x) \geq 0,$
then there exists a positive continuous function
$\Lambda=(\Lambda_1,\dots,\Lambda_m): E_2\setminus E_1\to\R^m$ such that
$\Lambda(x) >0$ and $D(x)^T\Lambda(x) \geq 0$ for all $x\in E_2\setminus E_1.$

\end{itemize}

\end{lem}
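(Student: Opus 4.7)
The plan is to combine the $M$-matrix decomposition from Lemma~\ref{mmat} with the Perron-Frobenius theorem for irreducible nonnegative matrices. Mimicking the proof of Lemma~\ref{mmat}, for every $x \in E_2$ I would write $D(x) = s(x) I - B(x)$ with $s(x) := \max_{1 \leq k \leq m} d_{kk}(x) > 0$ and $B(x) \geq 0$. The off-diagonal pattern of $B(x)$ coincides with that of $-D(x)$, so the irreducibility hypothesis on $D(x)$ transfers to $B(x)$. Perron-Frobenius for irreducible nonnegative matrices then says that $\rho(B(x))$ is a simple eigenvalue, that it is the only eigenvalue admitting a nonnegative eigenvector, that this eigenvector is strictly positive and unique up to scaling, and that every other eigenvalue $\lambda$ of $B(x)$ satisfies $|\lambda| \leq \rho(B(x))$. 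The same statements apply to $B(x)^T$, with identical spectral radius.

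For part (i), I would note that $x \in E_1$ translates, via~\eqref{degenerate}, into $D(x)\mathbf{1} = 0$ with $\mathbf{1} = (1,\dots,1)$, i.e.\ $B(x)\mathbf{1} = s(x)\mathbf{1}$. Since $\mathbf{1}$ is a strictly positive eigenvector, the Perron-Frobenius uniqueness clause forces $s(x) = \rho(B(x))$, and simplicity of this eigenvalue yields $\ker D(x) = \mathrm{span}\{\mathbf{1}\}$ and hence $\mathrm{rank}\, D(x) = m-1$. For any other eigenvalue $\lambda$ of $B(x)$, one has $|\lambda| \leq s(x)$ and $\lambda \neq s(x)$, forcing $\mathrm{Re}(\lambda) < s(x)$; thus the corresponding nonzero eigenvalue $s(x) - \lambda$ of $D(x)$ has strictly positive real part. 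Applying Perron-Frobenius to $B(x)^T$ produces a strictly positive $\Lambda(x)$ with $B(x)^T \Lambda(x) = s(x) \Lambda(x)$, that is $D(x)^T \Lambda(x) = 0$.

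Part (ii) is a shorter variant. On $E_2 \setminus E_1$ the identity $s(x) = \rho(B(x))$ need not hold, but Lemma~\ref{mmat} still gives $s(x) \geq \rho(B(x))$. Applying Perron-Frobenius to $B(x)^T$ again yields a strictly positive $\Lambda(x)$ with $B(x)^T \Lambda(x) = \rho(B(x))\Lambda(x)$, whence $D(x)^T \Lambda(x) = (s(x) - \rho(B(x)))\Lambda(x) \geq 0$.

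The main obstacle is the continuous selection of $\Lambda$ in $x$. The key observation is that, thanks to the irreducibility assumption, $\rho(B(x))$ is a \emph{simple} eigenvalue of $B(x)^T$ for every $x \in E_2$. By the classical perturbation theory of matrices, a simple eigenvalue of a continuously varying matrix, together with its associated one-dimensional spectral projector, depends continuously on the matrix. Defining $\Lambda(x)$ as the unique strictly positive vector in the Perron eigenspace of $B(x)^T$ normalized by $\sum_i \Lambda_i(x) = 1$ removes the scaling ambiguity and produces a continuous, strictly positive function on $E$ (respectively on $E_2 \setminus E_1$) satisfying the required eigenvector equation.
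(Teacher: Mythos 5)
Your pointwise argument is essentially the paper's: same $M$-matrix decomposition $D(x)=s(x)I-B(x)$ with $s(x)=\max_k d_{kk}(x)$, same transfer of irreducibility from $D$ to $B$, same use of Perron--Frobenius to identify $s(x)=\rho(B(x))$, deduce simplicity of the zero eigenvalue, bound the real parts of the other eigenvalues, and obtain a positive left null vector from the Perron eigenvector of $B(x)^T$. Where you depart from the paper is in two places, and both of your variants are correct.

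For the continuous selection of $\Lambda$, the paper avoids perturbation theory entirely: it observes that the columns of the cofactor matrix $\mathrm{com}(D(x))$ lie in $\ker D(x)^T$, hence are scalar multiples of the Perron direction, and takes $\tilde\Lambda(x)=\sum_j|C_j|(x)$, which is positive because $\mathrm{rank}\,D(x)=m-1$ forces $\mathrm{com}(D(x))\neq 0$, and continuous because cofactors are polynomials in the entries of $D(x)$. You instead invoke the standard fact that a simple eigenvalue and its one--dimensional spectral projector depend continuously on the matrix, and normalize. Both routes are sound; the cofactor construction is more elementary and self-contained (it stays within linear algebra and needs no resolvent contour), whereas yours is shorter once the perturbation result is granted. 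Note that the cofactor argument is tied to the rank-$(m-1)$ situation, which is precisely why the paper cannot reuse it directly in part (ii): there it first introduces $\tilde D:=D-\Delta$ with $\Delta=\mathrm{diag}(\sum_j d_{ij})$, which \emph{does} satisfy the row-sum-zero condition, applies (i) to $\tilde D$, and then uses $D^T\Lambda=\Delta^T\Lambda\geq 0$. Your treatment of (ii) is more direct: keep the same $B(x)$, let $\Lambda(x)$ be the Perron eigenvector of $B(x)^T$, and read off $D(x)^T\Lambda(x)=(s(x)-\rho(B(x)))\Lambda(x)\geq 0$ from Lemma~\ref{mmat}. This produces a different (and simpler) $\Lambda$ than the paper's, but both satisfy the stated inequality, which is all the lemma requires. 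In short: same core, two small but genuine alternative routes for the continuity and for (ii), both valid.
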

%%%%%%%
\begin{proof}
(i) At first, we fix any $x\in E$ (this part of the
proof comes from~\cite[p.156]{bp94}). From~\eqref{degenerate},
$(1,\cdots,1)\in {\rm ker}(D(x)).$
From Lemma~\ref{mmat} and~\eqref{Dcst-irr}, $D(x)$ is a nonzero
$M$-matrix so we can write
$D(x)=s(x)I-B(x),$ $s(x)>0,$ $s(x)\geq \rho(B(x))$ and $B(x)\geq 0.$
It is obvious that, if $D(x)$
is irreducible, then so is $B(x)$.
By Perron-Froebenius theorem, it follows that
$\rho(B(x))$ is a simple eigenvalue of $B(x).$
Since, $D(x)(1,\cdots,1)=0=s(x)(1,\cdots,1)-B(x)(1,\cdots,1),$
we have $s(x)=\rho(B(x))$ and 0
is a simple eigenvalue of $D(x).$ Thus $D(x)$ has rank $m-1.$
Besides, if $\lambda\in\C\setminus\{0\}$ is another eigenvalue of
$D(x),$ then $s(x)-\lambda$ is an eigenvalue of $B(x).$
It follows that $|s(x)-\lambda|\leq \rho(B(x))=s(x)$ 
and, since $\lambda\not= 0,$
the real part of $\lambda$ must be positive.

Since $B^T(x)$ is also an irreducible nonnegative matrix
with $\rho(B(x))=\rho(B^T(x)),$ using again
Perron-Froebenius theorem, we obtain the existence of
an eigenvector $\Lambda(x) >0$
such that $B^T(x)\Lambda(x) =\rho(B(x))\Lambda(x).$
Therefore $D^T(x)\Lambda(x)=0.$

Then, we shall prove that it is possible to choose $\Lambda(x)$
continuously. Let ${\rm com}(D(x))$ be the cofactor matrix of $D(x).$
Since $D(x)$ is non invertible, we have
$D(x)^T {\rm com}(D(x))=0.$ Therefore the columns $C_j(x),$
$j=1,\dots,m,$ of ${\rm com}(D(x))$
are in the kernel of $D(x)^T.$
From the first part of the proof, we obtain that
there exist functions $\lambda_i: E \to\R$ such that
$C_j(x)=\lambda_j(x)\Lambda(x).$
Define $|C_j|(x)$ as the absolute value of the coefficients of
$C_j(x)$ and
$$
\tilde{\Lambda}(x)=\sum_{j=1}^m |C_j|(x)=\left(\sum_{j=1}^m|\lambda_j(x)|
\right)\Lambda(x).
$$
On  one hand, $\tilde{\Lambda}(x)>0$ since ${\rm com}(D(x))$ is not
0. On the other hand,
the continuity of the coefficients of $D(x)$ implies the continuity
of the coefficients of ${\rm com}(D(x))$ and therefore the maps
$x\mapsto |C_j|(x)$ are continuous on $E.$ We conclude that
$\tilde{\Lambda}$ is continuous.\\
(ii) The proof is an easy consequence of (i). Define
$\alpha_i(x):= \sum_{j=1}^m d_{ij}(x)\geq 0,$
the diagonal matrix $\Delta(x):= {\rm diag}(\alpha_1(x),\cdots,\alpha_m(x))$
and $\tilde D:= D-\Delta.$ It is straightforward that $\tilde D$
is still an irreducible $M$-matrix 
such that~\eqref{degenerate}
holds on $E_2\setminus E_1.$ By (i),  there exists a continuous
$\Lambda: E_2\setminus E_1 \to\R^m$ such that
$\Lambda(x) >0$ and $\tilde D(x)^T\Lambda(x) = 0.$
It follows $D(x)^T\Lambda(x)=\Delta(x)^T \Lambda(x)\geq 0$
by assumption. It completes the proof.
\end{proof}

%%%%%%%
%\begin{lem} \label{lem-noyau}
%Assume that $D=(d_{ij})_{1\leq i,j\leq m}$ is such that~\eqref{H3},
%\eqref{degenerate} holds true.
%If $Dp\leq 0$ for some $p=(p_1,\dots,p_m)\in\R^m,$ then
%$Dp=0.$ Therefore, if, in addition, \eqref{Dcst-irr} holds, then
% $p_1=p_2=\dots=p_m.$
%\end{lem}
%%%%%%%
%
%\begin{proof}
%Consider $i$ such that $p_i={\rm max}_j\, p_j.$ Then, since
%$d_{ij}\leq 0$ for $i\not= j,$ from~\eqref{H3}, we have
%$$
%0 = \left(\sum_{j=1}^m d_{ij}\right) p_i\leq
%\sum_{j=1}^m d_{ij} p_j \leq 0
%$$
%and we conclude. The last statement comes from Lemma~\ref{lem-rang} (i).
%\end{proof}

%%%%%%%%%%%%%%%%%%%%%%%%%%%%%%%%%%%%%%%%%%%%%%%%%%%%%%%%%%%%%%%%%%%%
%%%%%%%%%%%%%%%%%%%%%%%%%%%%%%%%%%%%%%%%%%%%%%%%%%%%%%%%%%%%%%%%%%%%
%%%%%%%%%%%%%%%%%%%%%%%%%%%%%%%%%%%%%%%%%%%%%%%%%%%%%%%%%%%%%%%%%%%%
%%%%%%%%%%%%%%%%%%%%%%%%%%%%%%%%%%%%%%%%%%%%%%%%%%%%%%

%%%%%%%%%%%%%%%%%%%%%%%%%%%%%%%%%%%%%%%%%%%%
%%%%%%%%%%%%%%%%%%%%%%%%%%%%%%%%%%%%%%%%%%%%
%%%%%%%%%%%%%%%%%%%%%%%%%%%%%%%%%%%%%%%%%%%%
%%%%%%%%%%%%%%%%%%%%%%%%%%%%%%%%%%%%%%%%%%%%
\section{Comparison, existence and  regularity
for the stationary system}\label{s3}
In this section we study existence and uniqueness of the solution to
 the stationary system
\begin{eqnarray}\label{statioHJ}
\displaystyle
H_i(x,Du_i)+ \sum_{j=1}^m d_{ij}(x)u_i=0\quad \text{in } \T^N,
\quad 1\leq i\leq m,
\end{eqnarray}
where $H_i:\T^N\times\R^N\to\R$, $i=1,\dots,m$, is a continuous function which
takes the form
\begin{eqnarray}\label{Hroquejo}
H_i(x,p)=F_i(x,p)-f_i(x).
\end{eqnarray}
 We assume that, for all $i=1,\cdots,m,$
\begin{eqnarray}
   & & \text{$f_i, F_i(\cdot,p)$ are continuous $1$-periodic for any $p\in\R^N$;}\label{H0}\\
   & & \text{$F_i(x,\cdot)$ is convex, coercive and,  for any $x\in \T^N,$
$p\in\R^N,$ }
  F_i(x,p)\geq F_i(x,0)=0\; ;\label{H1} \\
%&& \text{There exists a modulus of continuity $\omega$ such that,}\label{regH}\\
%&& |F_i(x,p)-F_i(y,p)|\leq \omega((1+|p|)|x-y|)\quad
%\text{for all $x,y\in\T^N,$ $p\in\R^N.$}\nonumber\\
 & & f_i(x)\geq 0. \label{regf}
\end{eqnarray}
We set for $i=1,\dots,m,$
\begin{eqnarray}
&\F=\{x\in \T^N:\sum_{i=1}^m f_i(x)=0\},\qquad\D_i=\{x\in \T^N: \sum_{j=1}^{m}d_{ij}(x)=0\} \label{F}\\
&\A=\F\cap\left( \bigcap_{i=1}^m \D_i\right).\label{aubry}
\end{eqnarray}

%%%
\begin{rem}\label{minfi}
Note that, under~\eqref{regf}, if $\F$ is not empty, it means that
all the $f_i$'s achieve a common minimum 0
at some common point.
\end{rem}
%%%

We recall the definition of viscosity solutions for the  system  \eqref{statioHJ} (see \cite{ishii92}, \cite{ik91} for more details about
systems of Hamilton-Jacobi equations). Let $USC$ (respectively $LSC$) denotes the upper-semicontinuous (respectively lower-semicontinuous) functions.

%%%%%%%%%%%%%%%%
\begin{defn}
\item[(i)] An $USC$ function $u:  \R^N \to\R^m$
is said a viscosity subsolution of \eqref{statioHJ}  if
whenever $\phi\in C^1$,
$i\in \{1,\dots,m\}$ and $u_i-\phi$ attains
a local maximum at $x$, then
\[H_i(x,D\phi(x))+\sum_{j=1}^{m}d_{ij}(x)u_j(x)\le 0.\]
\item[(ii)]  A $LSC$ $u:  \R^N  \to\R^m$ is said a viscosity  supersolution of
\eqref{statioHJ}  if whenever $\phi\in C^1$,
$i\in \{1,\dots,m\}$ and $u_i-\phi$ attains
a local minimum at $x$, then
\[ H_i(x,D\phi(x ))+\sum_{j=1}^{m}d_{ij}(x)u_j(x) \ge 0.\]
\item[(iii)] A continuous function $u$  is said a viscosity solution of  \eqref{statioHJ} if it is both
a viscosity sub- and supersolution of \eqref{statioHJ}.
\end{defn}

%In the sequel, for any $p=(p_1,\dots, p_m)\in\R^m,$ $p\geq 0$ means that the inequality
%holds for all components:
%$p_i\geq 0$ for all $i=1,\dots, m.$

We first prove a comparison theorem for \eqref{statioHJ} giving a boundary condition on the set
\eqref{aubry}, which turns out to be a uniqueness set for the system.

%%%%%%%%%
%       %
%%%%%%%%%
\begin{thm}\label{confr}
Assume \eqref{H3} and \eqref{H0}--\eqref{regf}.
Let $u\in USC(\T^N)$ and $v\in LSC(\T^N)$ be respectively a bounded
subsolution  and a bounded supersolution  of \eqref{statioHJ} and suppose that
one of the following set of assumptions holds:
\begin{itemize}
\item[(i)] Classical case:
\begin{eqnarray}\label{strict-pos}
\sum_{j=1}^{m}d_{ij}(x)>0 \quad \text{in } \T^N \ \text{ for all } 1\leq i\leq m.
\end{eqnarray}

\item[(ii)] Degenerate case:
Assume \eqref{Dcst-irr} holds with $E_2=\T^N$ and there exists
\begin{eqnarray}
&&\ \Lambda:\A\to \R^m, \ \Lambda \geq 0, \
 \sum_{i=1}^m \Lambda_i >0, \text{ such that }\\
&& \sum_{i=1}^m \Lambda_i(x) u_i(x)\leq   \sum_{i=1}^m \Lambda_i(x) v_i(x), \
x\in\A.\label{phiA}
\end{eqnarray}
\end{itemize}
Then
\[u\le v\quad \text{ in $\T^N$.}\]
\end{thm}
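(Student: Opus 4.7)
My plan is a contradiction argument based on doubling of variables, tailored to systems by fixing the index at which the maximum discrepancy $u_i-v_i$ is realized. Assume $M:=\max_{1\le i\le m}\max_{\T^N}(u_i-v_i)>0$; by upper semicontinuity and compactness this is attained, say at $(i_0,\bar x)$. I introduce the penalty
\[\Phi_\eps(x,y)=u_{i_0}(x)-v_{i_0}(y)-\frac{|x-y|^2}{\eps^2}\]
on $\T^N\times\T^N$ with a maximizer $(x_\eps,y_\eps)$. Standard penalization estimates and coercivity of $F_{i_0}$ (used in the subsolution inequality) force $(x_\eps,y_\eps)\to(\bar x,\bar x)$ along a subsequence and keep $p_\eps:=2(x_\eps-y_\eps)/\eps^2$ bounded. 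Writing the sub- and super-solution inequalities for the $i_0$-th equation at $(x_\eps,p_\eps)$ and $(y_\eps,p_\eps)$, subtracting and passing to the limit (continuity of $F_{i_0}, f_{i_0}, d_{i_0 j}$) yield
\[\sum_{j=1}^m d_{i_0 j}(\bar x)(u_j-v_j)(\bar x)\le 0.\]
Since $d_{i_0 j}(\bar x)\le 0$ for $j\ne i_0$ and $(u_j-v_j)\le M$, the termwise inequality $d_{i_0 j}(\bar x)(u_j-v_j)(\bar x)\ge d_{i_0 j}(\bar x)M$ summed over $j$ gives
\[M\sum_{j=1}^m d_{i_0 j}(\bar x)\le\sum_{j=1}^m d_{i_0 j}(\bar x)(u_j-v_j)(\bar x)\le 0.\qquad(\diamond)\]
In case (i), \eqref{strict-pos} makes the leftmost expression strictly positive, so $(\diamond)$ forces $M\le 0$, the desired contradiction.

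In case (ii), $(\diamond)$ together with $\sum_j d_{i_0 j}(\bar x)\ge 0$ yields $\sum_j d_{i_0 j}(\bar x)=0$ (so $\bar x\in\D_{i_0}$) and, since then $\sum_j d_{i_0 j}(\bar x)(u_j-v_j)(\bar x)=\sum_j d_{i_0 j}(\bar x)M=0$, the termwise inequality must be saturated: $(u_j-v_j)(\bar x)=M$ for every $j$ with $d_{i_0 j}(\bar x)\ne 0$. Each such $j$ attains the global maximum at the same point $\bar x$, so the doubling can be rerun with index $j$ (localized at $\bar x$ by a small penalty $\eta|x-\bar x|^2$ in $\Phi_\eps$ if $u_j-v_j$ has several global maximizers), and the chain characterization of the irreducibility of $D(\bar x)$ (Lemma~\ref{lem-rang}) propagates the property to every index:
\[\bar x\in\bigcap_{k=1}^m\D_k,\qquad (u_k-v_k)(\bar x)=M\text{ for every }k.\]
To invoke \eqref{phiA}, I still need $\bar x\in\F$, placing $\bar x\in\A$. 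For this I would exploit \eqref{H1}: dropping the nonnegative term $F_i$ in each subsolution inequality yields, in the limit, $\sum_j d_{ij}(\bar x)u_j(\bar x)\le f_i(\bar x)$ for every $i$. Using $\sum_j d_{ij}(\bar x)=0$ and $u_j-v_j\equiv M$ at $\bar x$ gives the identity $\sum_j d_{ij}(\bar x)u_j(\bar x)=\sum_j d_{ij}(\bar x)v_j(\bar x)$, and matching this with the supersolution inequality pins down each $F_i$-term in the limit. Contracting these with the Perron eigenvector $\pi(\bar x)>0$ of $D(\bar x)^T$ produced by Lemma~\ref{lem-rang}(i) --- applicable since $D(\bar x)$ is irreducible and $\bar x\in\bigcap_k\D_k$ --- and using $F_i\ge 0$, $f_i\ge 0$ then forces $f_i(\bar x)=0$ for every $i$, hence $\bar x\in\F$. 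Once $\bar x\in\A$, hypothesis \eqref{phiA} gives
\[M\sum_{i=1}^m\Lambda_i(\bar x)=\sum_{i=1}^m\Lambda_i(\bar x)(u_i-v_i)(\bar x)\le 0,\]
which together with $\sum_i\Lambda_i(\bar x)>0$ contradicts $M>0$, concluding $u\le v$.

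I expect the principal obstacle to be the passage $\bar x\in\bigcap_k\D_k\Rightarrow\bar x\in\F$ in case (ii). In case (i) the strict positivity of the row sums closes the argument at once, but in case (ii) no such strictness is available and one must carefully thread the equality cases of the doubling estimates through both viscosity inequalities and combine with the sign structure $F_i\ge 0, f_i\ge 0$ and the Perron--Frobenius data supplied by Lemma~\ref{lem-rang}(i). Ensuring that the re-doubled problems for each surviving index all localize at the same point $\bar x$ and extracting the vanishing of the $f_i$'s from the equality cases are the genuinely delicate technical points.
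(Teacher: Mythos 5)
Your scheme follows the paper's proof fairly closely at the structural level (analysis of which indices attain the maximum, irreducibility chain to propagate, final appeal to~\eqref{phiA}), but it is missing the single device that makes the paper's argument work in the degenerate case: the $\mu$-perturbation $\mu u_k-v_k$ with $\mu<1$, combined with the convexity of $F_i$. The paper's Case~2 yields, after the doubling,
\begin{equation*}
\Bigl(\sum_{j=1}^m d_{ij}(\bar x)\Bigr)M_\mu\leq(\mu-1)f_i(\bar x)+o_\eps(1),
\end{equation*}
and it is precisely the strictly negative term $(\mu-1)f_i(\bar x)$, present only because $\mu<1$, that produces a contradiction when $\sum_j d_{ij}(\bar x)=0$ but $f_i(\bar x)>0$. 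With $\mu=1$ this term vanishes and you are left with $M\sum_j d_{ij}(\bar x)\leq 0$, which gives no information at a point of $\bigcap_j\D_j\setminus\F$. That is exactly the point where your proof stalls, and you correctly flag it as the ``principal obstacle,'' but the resolution you sketch does not close it.

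Concretely, the Perron-eigenvector argument you propose for $\bar x\in\F$ does not work. Dropping $F_i\geq 0$ from the subsolution inequality gives $\sum_j d_{ij}(\bar x)u_j(\bar x)\leq f_i(\bar x)$; contracting with $\pi(\bar x)>0$ satisfying $D(\bar x)^T\pi(\bar x)=0$ kills the left side and produces only the tautology $0\leq\sum_i\pi_i(\bar x)f_i(\bar x)$. The supersolution inequality is equally unhelpful: since $(u_j-v_j)(\bar x)\equiv M$ and $\sum_j d_{ij}(\bar x)=0$, the coupling terms on the sub- and super-solution sides cancel, and one ends up with $\sum_i\pi_i F_i(\bar x,p_i)\leq\sum_i\pi_i f_i(\bar x)\leq\sum_i\pi_i F_i(\bar x,q_i)$, which is consistent with any nonnegative values of $f_i(\bar x)$. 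There is no mechanism that ``pins down each $F_i$-term'' and forces $f_i(\bar x)=0$; in fact there is no reason the maximizer must lie in $\F$ at all --- the paper's proof simply shows by contradiction (via the $\mu$-trick) that such a maximizer cannot be a maximum of $\mu u-v$. A secondary, smaller issue: the displayed inequality $\sum_j d_{i_0 j}(\bar x)(u_j-v_j)(\bar x)\leq 0$ is not directly obtainable by ``passing to the limit'' because $v$ is only LSC and for $j\neq i_0$ one only controls $\limsup(u_j(x_\eps)-v_j(y_\eps))\leq M$; the usable conclusion is the termwise one, $M\sum_j d_{i_0 j}(\bar x)\leq 0$, which is what both you and the paper actually rely on, so this is fixable, unlike the $\F$-membership gap.
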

%%%%%%%%%%%%%

\begin{proof}
The proof of the classical case can be deduced from the lines
of the degenerate case, so we skip it and turn to the degenerate case.
See some comments at the end of Case 2 below.

Let $0<\mu<1,$ and consider
\begin{eqnarray}\label{supbla}
\mathop{\rm sup}_{\T^N}\ \mathop{\rm sup}_{1\leq k\leq m} \{ \mu u_k - v_k\}=: M_\mu.
\end{eqnarray}
We assume that $M_\mu> 0$ (otherwise, there is nothing to prove).
By compactness, the above maximum is achieved for some $k_0$ at some $x_0\in \T^N.$
We set
$$
\mathcal{I}=\{k\in\{1,\cdots, m\}: (\mu u_k - v_k)(x_0)=M_\mu\}.
$$
We distinguish 3 cases.\\
{\it Case 1: $\mathcal{I}=\{1,\cdots, m\}$ and $x_0\in \mathcal{A}.$}
For all $k,$ we get
$$
\sum_{k=1}^m\Lambda_k(x_0) M_\mu
= \sum_{k=1}^m\Lambda_k(x_0)(\mu u_k - v_k)(x_0)
\leq (\mu-1)  \sum_{k=1}^m\Lambda_k(x_0)v_k(x_0)
\leq (1-\mu)\sum_{k=1}^m\Lambda_k(x_0)|v|_\infty
$$
and therefore $M_\mu\leq (1-\mu)|v|_\infty.$
\medskip

\noindent{\it Case 2: $\mathcal{I}=\{1,\cdots, m\}$ but $x_0\not\in \mathcal{A}.$}
One can find $i\in\{1,\dots,m\}$ such that
\begin{equation}\label{iiii}
   \text {either}\qquad f_i(x_0)>0\quad \text{or}  \qquad \sum_{j=1}^{m}d_{ij}(x_0)>0.
\end{equation}
Consider
\begin{eqnarray*}
\mathop{\rm sup}_{\T^N\times \T^N} \{ \mu u_i(x) - v_i(y)-\frac{|x-y|^2}{2\e^2}-|x-x_0|^2\}.
\end{eqnarray*}
The latter maximum is greater than $M_\mu$ and is
achieved at some $(\bar{x},\bar{y})$ which satisfy the following
classical properties:
\begin{eqnarray}\label{estim-class}
\bar{x},\bar{y}\to x_0 \quad \text{and} \quad
\frac{|\bar{x}-\bar{y}|^2}{2\e^2},|\bar{x}-x_0|^2 \to 0 \quad
\text{as } \e\to 0.
\end{eqnarray}
We set $\displaystyle p_\e= \frac{\bar{x}-\bar{y}}{\e^2}.$

Writing that $u_i$ is a viscosity subsolution of \eqref{statioHJ}, we have
\begin{eqnarray}\label{ineg-soussol}
\mu F_i(\bar{x},\frac{p_\e+2(\bar{x}-x_0)}{\mu})
+\sum_{j=1}^m d_{ij}(\bar{x})\mu u_j(\bar{x})
\leq \mu f_i(\bar{x})
\end{eqnarray}
and writing that $v_i$ is a supersolution of \eqref{statioHJ}, we get
\begin{eqnarray*}
F_i(\bar{y},p_\e)
+\sum_{j=1}^m d_{ij}(\bar{y}) v_j(\bar{y})
\geq f_i(\bar{y}).
\end{eqnarray*}

From the coercivity of $F_i$ and the boundedness of $f_i$ and the
$d_{ij}$'s on $\T^N,$ \eqref{ineg-soussol} implies that
\begin{eqnarray}\label{Cgrad}
p_\e\leq C=C(F_i,D,f_i).
\end{eqnarray}

We subtract the two inequalities. At first
\begin{eqnarray*}
\mu F_i(\bar{x},\frac{p_\e+2(\bar{x}-x_0)}{\mu})- F_i(\bar{y},p_\e)
&=& \mu F_i(\bar{x}, \frac{p_\e+2(\bar{x}-x_0)}{\mu})-  F_i(\bar{x},p_\e)\\
&& +  F_i(\bar{x},p_\e)- F_i(\bar{y},p_\e).
\end{eqnarray*}
By convexity of $F_i,$ we have
\begin{eqnarray*}
\mu F_i(\bar{x}, \frac{p_\e+2(\bar{x}-x_0)}{\mu})-  F_i(\bar{x},p_\e)
\geq -(1-\mu) F_i(\bar{x},\frac{2(\bar{x}-x_0)}{1-\mu})
\mathop{\longrightarrow}_{\e\to 0}  -(1-\mu) F_i(x_0,0)=0
\end{eqnarray*}
by \eqref{H1}.
On the other hand, using~\eqref{estim-class} and the uniform continuity of
$F_i$ on the compact subset $\T^N\times \overline{B}(0,C),$
where $C$ is given by~\eqref{Cgrad}, we have
\begin{eqnarray*}
|F_i(\bar{x},p_\e)- F_i(\bar{y},p_\e)|\leq o_\e(1)
\end{eqnarray*}
where $o_\e(1)\to 0$ as $\e\to 0.$
Moreover
\[
\sum_{j=1}^m d_{ij}(\bar{x})\mu u_j(\bar{x})-d_{ij}(\bar{y}) v_j(\bar{y}) =
\sum_{j=1}^m d_{ij}(\bar{x})(\mu u_j(\bar{x}) - v_j(\bar{y}))
 +  \sum_{j=1}^m (d_{ij}(\bar{x})-d_{ij}(\bar{y})) v_j(\bar{y})
\]
and
\begin{eqnarray*}
\sum_{j=1}^m (d_{ij}(\bar{x})-d_{ij}(\bar{y})) v_j(\bar{y})
\mathop{\longrightarrow}_{\e\to 0} 0
\end{eqnarray*}
since $v_i$ are bounded and $d_{ij}$ is continuous.
Finally, we obtain
\begin{eqnarray}\label{ineg-fondam}
\sum_{j=1}^m d_{ij}(\bar{x})(\mu u_j(\bar{x}) - v_j(\bar{y}))
\leq (\mu -1)f_i(\bar{x})+ o_\e(1).
\end{eqnarray}

Since $\mu u_j - v_j$ is $USC$, for all $j,$
\begin{eqnarray}\label{formabc}
\mathop{\rm lim\,sup}_{\e\to 0}(\mu u_j(\bar{x}) - v_j(\bar{y}))\leq
\mu u_j(x_0) - v_j(x_0)\leq M_\mu.
\end{eqnarray}
Recalling that $d_{ij}\leq 0$ for $j\not= i,$ it follows
\begin{eqnarray}\label{formabcd}
d_{ij}(\bar{x})  (\mu u_j(\bar{x}) - v_j(\bar{y}))\geq d_{ij}(\bar{x}) M_\mu + o_\e(1)
\quad \text{for } j\not= i.
\end{eqnarray}
Moreover, since $i\in \mathcal{I},$ we have $\mu u_i(\bar{x}) - v_i(\bar{y})\geq M_\mu$ and
\begin{eqnarray*}
d_{ii}(\bar{x})(\mu u_i(\bar{x}) - v_i(\bar{y}))\geq d_{ii}(\bar{x})M_\mu.
\end{eqnarray*}

From \eqref{ineg-fondam}, we get
\begin{eqnarray}\label{ineg745}
\left(\sum_{j=1}^m d_{ij}(\bar{x})\right)M_\mu\leq (\mu -1)f_i(\bar{x})+ o_\e(1)
\end{eqnarray}
which leads to a contradiction from $\e$ small enough since
$M_\mu> 0$ and, by \eqref{iiii}, either $\sum_{j=1}^m d_{ij}(x_0)> 0$ or  $f_i(x_0)>0.$

The proof of the theorem in the classical case reduces to
Case 2. Indeed, in the classical case, $\F=\emptyset$ and,
regardless $\mathcal{I}=\{1,\cdots m\}$ or not, we can always
choose $i\in\mathcal{I}$ in order that~\eqref{iiii} holds.
Notice that we do not need~\eqref{regf}. It suffices to send
$\epsilon\to 0$ and $\mu\to 1$ in~\eqref{ineg745}.
\medskip

\noindent{\it Case 3: $\mathcal{I}\not=\{1,\cdots m\}$.}
Using that $D(x_0)$ is irreducible, there exist $i\in \mathcal{I}$ and
$k\not\in \mathcal{I}$ such that $d_{ik}(x_0)<0.$
We argue as in Case 2 to obtain \eqref{ineg-fondam}.
Inequalities \eqref{formabc} and \eqref{formabcd} hold true in this case too.
But we need a more precise estimate for the index $k.$ Since
$k\not\in \mathcal{I},$
\begin{eqnarray}\label{formabce}
\mathop{\rm lim\,sup}_{\e\to 0}(\mu u_k(\bar{x}) - v_k(\bar{y}))\leq
\mu u_k(x_0) - v_k(x_0)\leq M_\mu-\eta
\quad \text{for some } \eta >0.
\end{eqnarray}
From \eqref{ineg-fondam}, \eqref{formabc}, \eqref{formabcd} and \eqref{formabce}, we obtain
\begin{eqnarray*}
d_{ii}(\bar{x})M_\mu + \sum_{j\not= i,k} d_{ij}(\bar{x})M_\mu
+ d_{ik}(\bar{x}) (M_\mu -\eta )\leq (\mu -1)f_i(\bar{x})+ o_\e(1).
\end{eqnarray*}
It follows
\begin{eqnarray*}
-d_{ik}(x_0)\eta \leq
\left(\sum_{j=1}^m d_{ij}(\bar{x})\right)M_\mu
-d_{ik}(x_0)\eta \leq (\mu -1)f_i(\bar{x})+o_\e(1) \leq o_\e(1)
\end{eqnarray*}
which leads to a contradiction for small $\e$ since $d_{ik}(x_0)<0.$
\medskip

\noindent{\it End of the proof.}
The only possible case is $M_\mu\leq (1-\mu)|v|_\infty$ which implies that
$M_1\leq 0.$ The proof is complete.
\end{proof}
%%%%%%%%%
\begin{rem} \label{rmq-comp}
The classical case \eqref{strict-pos} corresponds  to the scalar case $\l u+H(x,Du)=0$
with $\l>0$ and, in this case, we always have  existence and uniqueness
of the solution.
Theorem~\ref{confr} (ii) says that $\A$ is an uniqueness  
set for \eqref{statioHJ}.
We recall that for the single equation $F(x, Du)=f(x)$, where $F$, $f$
satisfy~\eqref{H0}--\eqref{regf}, the uniqueness set is $\{x\in\T^N:\, f(x)=0\}$
(see Fathi~\cite{fathi10}).
Notice that $\A$ maybe empty so we have automatically comparison
(but the existence of solutions may fail). When $\A$ is not empty,
it is enough to assume $u_i\leq v_i$ for {\it one} $i$ (this is a
consequence of the irreducibility of the coupling matrix).
\end{rem}

Before giving an existence result for \eqref{statioHJ},
we prove Lipschitz regularity of subsolutions.

%%%%%%%%%
\begin{lem}\label{subsol-lip}
Assume that \eqref{H3}
and~\eqref{H0}-\eqref{regf} hold.
Let $u\in USC( \T^N)$  be a bounded viscosity subsolution to \eqref{statioHJ}.
Then $u$ is Lipschitz continuous in $\T^N$ with a constant
$L=L(H_1,\cdots, H_m, D, |u|_\infty).$
If~\eqref{degenerate}-\eqref{Dcst-irr} hold with $E_1=E_2=\T^N,$
then $L$ is independent of $|u|_\infty.$
\end{lem}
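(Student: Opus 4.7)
My plan is to use the standard doubling-of-variables technique, adapted to the coupled system. Fix $L>0$ to be determined and set
$$M_L := \max_{1\leq i\leq m}\sup_{x,y\in\T^N}\bigl\{u_i(x)-u_i(y)-L|x-y|\bigr\}.$$
It suffices to show $M_L\leq 0$ once $L$ is large enough, which gives Lipschitz continuity with constant $L$. Supposing $M_L>0$, the upper semicontinuity of $u$ and compactness of $\T^N$ ensure that the supremum is attained at a triple $(i_0,\bar x,\bar y)$ with $\bar x\neq \bar y$. Since $\bar x\neq\bar y$, the function $\phi(x)=u_{i_0}(\bar y)+L|x-\bar y|$ is $C^1$ near $\bar x$ and $u_{i_0}-\phi$ has a local maximum at $\bar x$. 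The viscosity subsolution inequality at $(\bar x,i_0)$, with $p=D\phi(\bar x)$ and $|p|=L$, gives
$$F_{i_0}(\bar x,p)\leq f_{i_0}(\bar x)-\sum_{j=1}^m d_{i_0 j}(\bar x)u_j(\bar x)\leq |f|_\infty+m|D|_\infty|u|_\infty =: K.$$
Coercivity of $F_{i_0}$, uniform in $\bar x\in\T^N$ by continuity and compactness, then forces $L=|p|\leq L_0(H_1,\dots,H_m,D,|u|_\infty)$, contradicting $L>L_0$. This proves the first claim.

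For the refinement under \eqref{degenerate}--\eqref{Dcst-irr} with $E_1=E_2=\T^N$, the identity $\sum_j d_{i_0 j}(\bar x)=0$ allows the coupling term to be rewritten as
$$\sum_{j} d_{i_0 j}(\bar x)u_j(\bar x)=\sum_{j\neq i_0}|d_{i_0 j}(\bar x)|\bigl(u_{i_0}(\bar x)-u_j(\bar x)\bigr),$$
so the estimate now depends on pointwise differences of components rather than on $|u|_\infty$. To control these differences uniformly in $u$, I would first show that $v:=\max_i u_i$ is a viscosity subsolution of the scalar coercive equation $\min_{1\leq i\leq m} F_i(x,Dv)\leq |f|_\infty$: at any $x_0$ where $v-\varphi$ has a local maximum, picking $i^*$ with $v(x_0)=u_{i^*}(x_0)$ makes $u_{i^*}-\varphi$ have a local maximum at $x_0$, and substituting into the rewritten subsolution inequality together with $v-u_j\geq 0$ yields the claim. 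Coercivity then produces a Lipschitz bound on $v$ depending only on $H$. Combined with the uniform irreducibility of $D$---which, by continuity of the coefficients and compactness of $\T^N$, yields a lower bound $\delta_0>0$ on the strongest non-vanishing coupling available across any nontrivial partition of $\{1,\dots,m\}$---one propagates this bound to each difference $u_{i_0}-u_j$ via iterated max-principle arguments along chains of nonvanishing entries of $D$ (of length at most $m-1$). Plugging the resulting a priori bound into the doubling inequality yields $F_{i_0}(\bar x,p)\leq K'(H,D)$, and one concludes as in the first step.

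The main obstacle lies in the propagation of the max-principle estimate along the irreducibility graph: the scalar argument applies cleanly when a component $u_i$ attains its own global maximum at a point, but one must transfer the information to indices whose extrema occur elsewhere. A careful implementation, relying on the already-established uniform Lipschitz control of $v=\max_i u_i$ to compare values at nearby points and on uniform irreducibility to chain the bounds through the indices, produces a bound on $\max_{i,j,x}\bigl(u_i(x)-u_j(x)\bigr)$ in terms of $|f|_\infty$, $|D|_\infty$, and $\delta_0$ alone, which is precisely what is needed for the doubling argument to yield a Lipschitz constant independent of $|u|_\infty$.
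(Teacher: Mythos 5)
Your argument for the first claim (Lipschitz with a constant depending on $|u|_\infty$) is correct and amounts to the same idea as the paper: the subsolution inequality gives a bound $F_i(x,Du_i)\leq |f|_\infty+m|D|_\infty|u|_\infty$ and coercivity of $F_i$ does the rest, whether one phrases it via doubling of variables as you do or via Barles' lemma as the paper does.

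For the refinement (Lipschitz constant independent of $|u|_\infty$), however, you take a genuinely different route and there is a real gap. The paper avoids the problem of controlling $u_{i_0}-u_j$ entirely: by Lemma~\ref{lem-rang}~(i), irreducibility together with $\sum_j d_{ij}=0$ produces a continuous \emph{left} Perron eigenvector $\Lambda(x)>0$ with $D(x)^T\Lambda(x)=0$. Once $u$ is known Lipschitz (by the first part), one multiplies the a.e.\ inequality $F_i(x,Du_i)\leq f_i(x)-\sum_j d_{ij}u_j$ by $\Lambda_i(x)$ and sums over $i$; the coupling terms cancel identically because $\sum_i\Lambda_i d_{ij}=0$, leaving $\sum_i\Lambda_i(x)F_i(x,Du_i)\leq\sum_i|\Lambda_i f_i|_\infty$. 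Since $\Lambda_i\geq\eta(D)>0$ by compactness, each $F_i(x,Du_i)$ is bounded independently of $|u|_\infty$ and coercivity concludes.

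Your approach instead rewrites the coupling at index $i_0$ as $\sum_{j\neq i_0}|d_{i_0 j}|(u_{i_0}-u_j)$ and tries to bound the pointwise oscillations $u_i(x)-u_j(x)$. The step where $v=\max_i u_i$ is a subsolution of $\min_i F_i(x,Dv)\leq|f|_\infty$ is correct, and so is the compactness/finiteness argument that gives a uniform $\delta_0>0$ for the nonvanishing coupling across partitions. But the crucial lemma --- that $\max_{i,j,x}\bigl(u_i(x)-u_j(x)\bigr)$ is bounded purely in terms of $|f|_\infty$, $|D|_\infty$, $\delta_0$ --- is not proved; you acknowledge it as ``the main obstacle'' and only sketch an iterative propagation along chains, without specifying how the inequality is tested at an index $j$ that does not realize $\max_k u_k(\bar x)$ (where the terms $|d_{jk}|(u_j-u_k)$ change sign and the naive max-principle fails), nor how many iterations cost how much. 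The Lipschitz control of $\max_i u_i$ does not directly control the pointwise spread $u_i(\bar x)-u_j(\bar x)$ at a fixed $\bar x$, so that ingredient does not close the gap either. In short: the strategy might be completable, but as written the oscillation bound is asserted rather than established, whereas the paper's use of the dual Perron vector $\Lambda$ eliminates the coupling term in one line and is the reason the hypothesis of irreducibility appears in the statement.
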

%%%%%%%%%

\begin{proof}
From the coercitivity of the Hamiltonians $F_i$ and
\cite[Lemma 2.5 p.33]{barles94}, it is sufficient to prove
that $u_i$ is a viscosity subsolution of $F_i(x,Du_i)\le C$ in $\T^N.$
We have
\begin{eqnarray*}
F_i(x,Du_i)\leq \sum_{j=1}^m |d_{ij}(x)||u_j(x)| + f_i(x),
\end{eqnarray*}
which gives the result with a Lipschitz constant depending
on $H_1,\cdots, H_m, |D|_\infty, |u|_\infty.$

Now, if~\eqref{degenerate}-\eqref{Dcst-irr} hold in $\T^N,$ then, from
Lemma~\ref{lem-rang} (i), there exists
a continuous function $\Lambda:\T^N\to\R^m,$ $\Lambda >0,$ such that
$D(x)^T\Lambda(x)= 0$ for all $x\in\T^N.$
By multiplying Equations~\eqref{HJLm} by $\Lambda_i(x)$
and summing for $i=1,\dots, m,$
we obtain
\begin{eqnarray}\label{somequ123}
&&\sum_{i=1}^m \Lambda_i(x)F_i(x,Du_i)
+ \sum_{i,j=1}^m\Lambda_i(x)d_{ij}(x)u_j=\sum_{i=1}^m \Lambda_i(x)f_i(x).
\end{eqnarray}
We have
\begin{eqnarray*}
\sum_{i,j=1}^m\Lambda_i(x)d_{ij}(x)u_j
= \sum_{j=1}^m\left(\sum_{i=1}^m\Lambda_i(x)d_{ij}(x)\right)u_j
=0
\end{eqnarray*}
since $D(x)^T\Lambda(x)= 0$.
It follows from~\eqref{somequ123} that
\begin{eqnarray*}
 \sum_{i=1}^m \Lambda_i(x)F_i(x,Du_i)\leq \sum_{i=1}^m |\Lambda_if_i|_\infty,
\end{eqnarray*}
By the compactness of
$\T^N$ and the continuity of $\Lambda,$ there
exists $\eta=\eta (D) >0$ such that $\Lambda_i\geq \eta$
on $\T^N$ for all $i.$ It completes the proof.
\end{proof}
%%%%%

To state an existence result with prescribed values on
$\mathcal{A},$ we need to introduce some definitions of
Fathi and Siconolfi~\cite{fs05}.
Define, for every $x\in\T^N,$ $p\in\R^N,$
\begin{eqnarray}\label{Fetf}
F(x,p)= \mathop{\rm max}_{1\leq i\leq m} F_i(x,p)
\quad {\rm and} \quad
f(x)= \mathop{\rm min}_{1\leq i\leq m} f_i(x)
\end{eqnarray}
and set
\begin{eqnarray*}
S(y,x)={\rm max}\{ u(x)\; : \;
u \text{ subsolution of } F(x,Du)\leq f(x) \ {\rm on} \ \T^N
\text{ with } u(y)=0\}.
\end{eqnarray*}

%%%%%%%%%
\begin{prop}\label{subsol}
Assume  \eqref{H3}, \eqref{Dcst-irr} with $E_2=\T^N,$ \eqref{H0}--\eqref{regf}.
\begin{itemize}

\item[(i)] (Classical case) If~\eqref{strict-pos} holds, then there
exists a unique continuous viscosity solution
of~\eqref{statioHJ}.

\item[(ii)] (Degenerate case) Suppose that~\eqref{degenerate} holds
with $E_1=\T^N$
and $\F\not=\emptyset.$
For any continuous function $g:\F\to\R$ satisfying
\begin{eqnarray}\label{cond-fathi-sico}
g(x)\left(\sum_{i=1}^m \Lambda_i(x)\right)^{-1}
- g(y)\left(\sum_{i=1}^m \Lambda_i(y)\right)^{-1}
\leq S(y,x),
\end{eqnarray}
there exists a unique continuous viscosity solution $u$
of~\eqref{statioHJ} such that
\begin{eqnarray}\label{cond-sur-F}
\sum_{i=1}^m \Lambda_i(x)u_i(x)= g(x)\qquad x\in\F,
\end{eqnarray}
where the continuous function $\Lambda:\T^N\to\R^m,$ $\Lambda >0,$
is given by Lemma~\ref{lem-rang}.

\end{itemize}
\end{prop}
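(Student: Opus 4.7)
My plan is to prove (i) by a direct Perron argument with constant barriers plus the comparison principle Theorem~\ref{confr}(i), and to reduce (ii) to the same scheme after using Fathi--Siconolfi theory to build a subsolution whose trace on $\F$ matches $g$ through $\Lambda$.

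For part (i), set $\delta_0:=\min_{x\in\T^N,\,i}\sum_j d_{ij}(x)$, which is strictly positive by \eqref{strict-pos}, and choose $M>0$ with $M\delta_0\geq \max_i\|f_i\|_\infty$. Using $F_i(x,0)=0$, a direct check shows that $\underline u=(-M,\dots,-M)$ and $\overline u=(M,\dots,M)$ are respectively a sub- and a supersolution of~\eqref{statioHJ}, since $-f_i(x)\pm M\sum_j d_{ij}(x)$ has the correct sign in each case. Perron's method for monotone weakly coupled systems (in the spirit of \cite{ik91}) combined with Theorem~\ref{confr}(i) then delivers the unique continuous viscosity solution.

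For part (ii), Lemma~\ref{lem-rang}(i) (applied with $E_1=E_2=\T^N$) produces a continuous $\Lambda:\T^N\to\R^m$, $\Lambda>0$, with $D(x)^T\Lambda(x)=0$. Put $\tilde g(x):=g(x)/\sum_i\Lambda_i(x)$ on $\F$; condition~\eqref{cond-fathi-sico} is precisely the Fathi--Siconolfi compatibility $\tilde g(x)-\tilde g(y)\leq S(y,x)$ for the scalar equation $F(x,Du)=f(x)$, so by \cite{fs05} the function
\[
\phi(x):=\inf_{y\in\F}\bigl\{\tilde g(y)+S(y,x)\bigr\}
\]
is a continuous viscosity solution of $F(x,D\phi)=f(x)$ on $\T^N$ with $\phi=\tilde g$ on $\F$. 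Since $\sum_j d_{ij}(x)\equiv 0$ by \eqref{degenerate}, the coupling term vanishes on vectors with equal components, and $F_i(x,D\phi)-f_i(x)\leq F(x,D\phi)-f(x)=0$ in the viscosity sense; hence $\underline u:=(\phi,\dots,\phi)$ is a subsolution of~\eqref{statioHJ} with $\sum_i\Lambda_i\underline u_i=g$ on $\F$.

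The main obstacle is producing a companion supersolution $\overline u\geq\underline u$ with $\sum_i\Lambda_i\overline u_i\geq g$ on $\F$: constants are never supersolutions (this would force $f_i\leq 0$), and scalar solutions of max-type Hamilton--Jacobi equations always yield system subsolutions, not supersolutions. My plan is to regularize via $D_\eps:=D+\eps I$, which satisfies \eqref{strict-pos}, solve the perturbed system using part (i) after shifting its source term so as to anchor the prescribed trace on $\F$, then invoke Lemma~\ref{subsol-lip} (whose Lipschitz constant is $|u|_\infty$-independent in the degenerate regime) together with $\underline u$ and a large constant above $\phi$ as barriers to produce $\eps$-uniform $L^\infty$ and Lipschitz estimates. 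Arzel\`a--Ascoli and viscosity-solution stability then deliver a limit $u$ solving~\eqref{statioHJ}, and a semi-relaxed-limits argument combined with Theorem~\ref{confr}(ii) ensures that the trace $\sum_i\Lambda_i u_i=g$ on $\F$ survives the limit. Uniqueness is then immediate from Theorem~\ref{confr}(ii): two solutions $u,v$ with the same trace on $\F$ satisfy both $u\leq v$ and $v\leq u$.
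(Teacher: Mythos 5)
Your treatment of part (i) and of the subsolution in part (ii) agrees with the paper. You also correctly identify the real obstacle in (ii): constants are never supersolutions of the degenerate system (that would force $f_i\leq 0$), and repeating a scalar supersolution of $F(x,D\psi)=f(x)$ does not give a system supersolution, since $F_i(x,D\psi)\geq f_i(x)$ does not follow from $F(x,D\psi)\geq f(x)$ when $F=\max_iF_i$ and $f=\min_if_i$. But the resolution you propose does not close the gap.

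The paper's missing idea is a concrete supersolution built from the distance to $\F$: set $\overline\psi=C'\,d_\F+C$ with $C\geq\max_i\|f_i\|_\infty$ and $C'$ chosen by coercivity so that $|p|\geq C'$ implies $F_i(x,p)\geq C$ for all $i$. Away from $\F$ one has $|Dd_\F|=1$ in the viscosity sense, so any test gradient has length $\geq C'$ and $F_i(x,D\varphi)\geq C\geq f_i$; on $\F$, $f_i=0$ and $F_i\geq 0$ so the supersolution inequality is trivial. With this $\overline\Psi=(\overline\psi,\dots,\overline\psi)$ and the Fathi--Siconolfi subsolution, Perron's method plus Theorem~\ref{confr}(ii) finish the argument.

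Your alternative, regularizing by $D_\eps=D+\eps I$, has two unresolved difficulties. First, once $\sum_jd_{ij}$ is replaced by $\eps>0$, assumption~\eqref{degenerate} fails for $D_\eps$, so the $|u|_\infty$-independent Lipschitz constant of Lemma~\ref{subsol-lip} is no longer available, and the only obvious constant supersolution of the perturbed system is of size $O(1/\eps)$, which ruins the uniform $L^\infty$ bound you need. Second, and more fundamentally, the perturbed problem has a \emph{unique} solution with \emph{no} freedom to prescribe a trace on $\F$; Theorem~\ref{thm-ergodic} in fact shows that, with the natural normalization, these $\eps$-solutions converge to the solution with trace $0$ on $\F$. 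Your phrase ``after shifting its source term so as to anchor the prescribed trace on $\F$'' is where a genuine new construction would have to go, and it is left entirely unspecified; passing to semi-relaxed limits and invoking Theorem~\ref{confr}(ii) cannot recover the trace condition if the approximations never satisfied it. The distance-function supersolution is the device that makes this step unnecessary, and it is the ingredient your proposal is missing.
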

%%%%%%%%%

%%%%
\begin{rem}
The assumption~\eqref{cond-fathi-sico} can be seen as a compatibility
condition.
We cannot prescribe any function $g$ on~$\mathcal{A}.$
Indeed, for instance, if $\Lambda_i=1$ for all $i$ and 
$g$ has a large Lipschitz
constant compared to the one given by Lemma~\ref{subsol-lip}, then
it is straightforward to see that it is not possible
to build a solution $u$ satisfying~\eqref{cond-sur-F}.
\end{rem}
%%%

\begin{proof}
The proof of the existence of a solution is based on Perron's method.
We start by building subsolutions and supersolutions.

From \eqref{H1}, \eqref{regf} and \eqref{H3}, we obtain that $\psi_C=(C,\dots,C)$
is a subsolution of \eqref{statioHJ}
for every nonpositive constant $C\leq 0.$ This subsolution is
suitable in the case (i).

In the case (ii), we need to build a subsolution which
satisfies~\eqref{cond-sur-F}, which is more tricky.
Note that $F$ and $f$ given by~\eqref{Fetf} still satisfy~\eqref{H0}--\eqref{regf}.
Since the compatibility condition~\eqref{cond-fathi-sico} holds, we
can use the result of~\cite[Prop. 4.7]{fs05}:
there exists a subsolution $\psi$ of
\begin{eqnarray*}
F(x,D\psi)=f(x), \quad x\in\T^N
\qquad {\rm with } \ \psi(x)=\frac{g(x)}{\sum_{i=1}^m \Lambda_i(x)}
\quad {\rm on} \ \F.
\end{eqnarray*}
Then, the following computation shows that $\underline\Psi:=(\psi,\cdots,\psi)$
is a subsolution of~\eqref{statioHJ} such that~\eqref{cond-sur-F}
holds: recalling~\eqref{degenerate}, for all $x\in\T^N,$
\begin{eqnarray*}
F_i(x,D\underline\Psi_i)+\sum_{j=1}^m d_{ij}\underline\Psi_j
= F_i(x,D\psi)+\psi(x)\sum_{j=1}^m d_{ij}
\leq F(x,D\psi)\leq f(x)\leq f_i(x).
\end{eqnarray*}

Now we turn to the construction of a supersolution.
Under assumption~\eqref{strict-pos}, we can use $\psi_C$
again with a suitable choice of $C\geq 0$ since, for all $i,$
\begin{eqnarray*}
F_i(x,D(\psi_C)_i(x))+
\sum_{j=1}^m d_{ij}(x)(\psi_C)_i(x)
= F_i(x,0) + \left(\sum_{j=1}^m d_{ij}(x)\right)C
\geq \eta C
\end{eqnarray*}
where
\begin{eqnarray*}
\eta =\mathop{\rm inf}_{\T^N} \sum_{j=1}^m d_{ij} >0
\end{eqnarray*}
by \eqref{strict-pos}. It then suffices to choose
\begin{eqnarray}\label{choix-c11}
C\geq \eta^{-1}\, \mathop{\rm max}_{1\leq i\leq m}|f_i|_\infty.
\end{eqnarray}

In the second case, since $\F\not=\emptyset,$
we may define $d_\F (x)={\rm dist}(x,\F).$
Choose $C$ as in \eqref{choix-c11} with $\eta=1.$
By coercivity
of $F_i$ (see \eqref{H1}), there exists $C' >0$ such that, for all
$x\in\T^N, p\in\R^N,$ if  $|p|\geq C',$ then
$F_i(x,p)\geq C$ for all $i.$
We claim that $\overline{\Psi}=(\overline{\psi},\dots,\overline{\psi})$
with $\overline{\psi}=C' d_\F+C,$ is a
viscosity supersolution of the stationary problem.
Indeed, let $i\in\{1,\cdots ,m\}$ and let $\varphi$ be a $C^1$ function such that
$\overline{\psi}-\varphi$ achieves a minimum at $x_0\not\in \F.$
In $\T^N\setminus\F,$ it is well-known that $|Dd_\F|=1$ in the viscosity sense.
Since $d_\F+C/C'-\varphi/C'$ achieves a minimum
at $x_0$, we get $|D\varphi(x_0)|\geq C'.$ It follows that
\begin{eqnarray*}
F_i(x_0, D\varphi(x_0))+\left(\sum_{j=1}^m d_{ij}(x_0)\right)\overline{\psi}(x_0)
\geq C\geq f_i(x_0)
\end{eqnarray*}
since both $\sum_{j=1}^m d_{ij}(x_0)$ and $\overline{\psi}(x_0)$ are
nonnegative.
If $x_0\in\F,$ then  $f_i(x_0)$
vanishes.
Since $F_i\geq 0,$ the supersolution inequality obviously holds on $\F.$
The claim is proved.

Then, we apply the extension of Perron's method
to systems, see~\cite{el91, ishii92}.
Using the comparison principle~\ref{confr} and
following readily the proof
of~\cite[Prop. 2.1]{el91}, we obtain that the supremum of subsolutions
which are less than $\psi_C$ (resp. $\overline\Psi$) is a solution
in the case (i) (resp. (ii)).
From Lemma~\ref{subsol-lip}, the subsolutions of~\eqref{statioHJ}
are Lipschitz continuous with a constant $L$ depending
only on $H_1,\cdots, H_m$ and $D.$ It follows that the
supremum is still Lipschitz continuous.
In the case of (ii), note that the supremum still
satisfies~\eqref{cond-sur-F}.
\end{proof}
%%%
\begin{rem} \label{Existence}
Under the assumptions~\eqref{H3}, \eqref{H0}--\eqref{regf}  there always
exists a subsolution. The assumption $\F\not=\emptyset$ is needed to build a supersolution
when for instance $\D_i=\T^N$ for all $i.$ Notice that, in the case (i) of 
Proposition~\ref{subsol}, we do not need to assume~\eqref{regf}, the $f_i$'s may be
any continuous functions in $\T^N.$
\end{rem}
%%%%%

%%%%%%%%%%%%%%%%%%%%%%%%%%%%%%%%%%%%%%%%%%%%%%%%%%%%%%%%%%%%%
%%%%%%%%%%%%%%%%%%%%%%%%%%%%%%%%%%%%%%%%%%%%%%%%%%%%%%%%%%%%%
%%%%%%%%%%%%%%%%%%%%%%%%%%%%%%%%%%%%%%%%%%%%%%%%%%%%%%%%%%%%%
\section{The ergodic problem}\label{s4}

In this section, we study the solutions of \eqref{statioHJ} with
an ergodic constant, that is:
\begin{equation}\label{HJ}
    H_i(x,Dv_i)+\sum_{j=1}^{m}d_{ij}(x)v_j=c_i\qquad x\in\T^N, \ 1\leq i\leq m,
\end{equation}
where $H_i$ is given by \eqref{Hroquejo}.
When~\eqref{strict-pos} holds, then, see Remark \ref{Existence},
for any $c=(c_1,\dots,c_m)$ there is
a unique viscosity solution $v\in C(\T^N)$. Hence we concentrate on the case  $\mathcal{D}_i$   not empty
for some $i $ and we   consider the ergodic approximation to \eqref{HJ}:
for  $\l \in (0,1)$, let  $v^\l=(v_1^\l,\dots,v^\l_m)$ be the solution  of
\begin{equation}\label{HJLm}
 \l v_i+ H_i(x,Dv_i)+\sum_{j=1}^{m}d_{ij}(x)v_j=0
\qquad x\in\T^N, \ 1\leq i\leq m.
\end{equation}

%%%%%%%%
\begin{lem}\label{reg-vl}
Assume~\eqref{H3}, \eqref{Dcst-irr} with $E_2=\T^N$ and \eqref{H0}--\eqref{regf}.
Then there exist a unique viscosity solution $v^\l$ of~\eqref{HJLm}
and some constants $C_0,M>0$ independent of $\l$ such that
$v^\l$ is Lipschitz continuous with constant $C_0$ and
\begin{eqnarray}\label{estim-vl}
0\leq v_i^\l\leq \frac{M}{\l}
\quad {and} \quad
\left|\sum_{j=1}^m d_{ij}v_j^\l\right|\leq M,
\qquad i=1,\dots,m.
\end{eqnarray}
\end{lem}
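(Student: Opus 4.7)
The plan is to reduce \eqref{HJLm} to a stationary system of the form \eqref{statioHJ} already handled by the classical case of Proposition~\ref{subsol}, extract the $L^\infty$-bound via comparison with constants, and then obtain the $\lambda$-independent Lipschitz and coupling estimates by combining a Perron--Frobenius type weight with the a.e.\ differentiability of Lipschitz functions.

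First, \eqref{HJLm} is exactly \eqref{statioHJ} with coupling matrix $D^\lambda := D + \lambda I$. Since $\sum_j(d_{ij} + \lambda\delta_{ij}) \geq \lambda > 0$, $D^\lambda$ satisfies~\eqref{strict-pos} and Proposition~\ref{subsol}(i) produces a unique continuous viscosity solution $v^\lambda$, which is Lipschitz by Lemma~\ref{subsol-lip} (with a constant a priori depending on $\lambda$ through $|v^\lambda|_\infty$). The constant $(0,\dots,0)$ is a subsolution of~\eqref{HJLm}, as $H_i(x,0) = -f_i \leq 0$, and with $M := \max_i |f_i|_\infty$ the constant $(M/\lambda,\dots,M/\lambda)$ is a supersolution since $M - f_i + (M/\lambda)\sum_j d_{ij} \geq M - f_i \geq 0$. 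Comparison (Theorem~\ref{confr}(i), applicable because $D^\lambda$ fulfils~\eqref{strict-pos}) then yields $0 \leq v_i^\lambda \leq M/\lambda$, establishing the first bound in~\eqref{estim-vl}.

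For the uniform Lipschitz estimate, I apply Lemma~\ref{lem-rang}(ii) with $E_1 = \emptyset$ and $E_2 = \T^N$: the irreducibility assumption~\eqref{Dcst-irr} provides a positive continuous $\Lambda : \T^N \to \R^m$ with $D(x)^T \Lambda(x) \geq 0$ on $\T^N$, and I set $\eta := \min_{x \in \T^N,\, i} \Lambda_i(x) > 0$. Rademacher's theorem ensures $v^\lambda$ is differentiable almost everywhere, and at every such point the viscosity equations reduce to pointwise classical identities. Multiplying the $i$-th identity by $\Lambda_i(x)$ and summing yields
\[
\sum_{i=1}^m \Lambda_i F_i(x, Dv_i^\lambda) + \lambda \sum_{i=1}^m \Lambda_i v_i^\lambda + \sum_{j=1}^m (D^T\Lambda)_j(x)\,v_j^\lambda(x) = \sum_{i=1}^m \Lambda_i f_i,
\]
and dropping the two nonnegative terms on the left-hand side (using $\lambda \geq 0$, $v^\lambda \geq 0$ and $D^T\Lambda \geq 0$) gives $\sum_i \Lambda_i F_i(x, Dv_i^\lambda(x)) \leq C$. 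Since $F_i \geq 0$ and $\Lambda_i \geq \eta$, one deduces $F_i(x, Dv_i^\lambda(x)) \leq C/\eta$ at a.e.~$x$, and coercivity~\eqref{H1} converts this into $|Dv_i^\lambda(x)| \leq C_0$ a.e., so $v^\lambda$ is $C_0$-Lipschitz with $C_0$ independent of $\lambda$.

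Finally, solving the classical identity for the coupling term at a.e.~$x$,
\[
\sum_{j=1}^m d_{ij}(x) v_j^\lambda(x) = f_i(x) - \lambda v_i^\lambda(x) - F_i(x, Dv_i^\lambda(x)),
\]
all three terms on the right are bounded uniformly in $\lambda$ by the previous steps (using continuity of $F_i$ on the compact set $\T^N \times \ov{B}(0, C_0)$). Continuity in $x$ of the left-hand side then propagates the bound to every $x \in \T^N$, giving the second estimate in~\eqref{estim-vl}. The main obstacle is really the uniform Lipschitz bound: viscosity subsolution inequalities cannot in general be summed with positive weights, which is why I first invoke Lemma~\ref{subsol-lip} (with a non-uniform constant) to secure Lipschitz regularity, then use Rademacher to recover pointwise identities amenable to linear combination through the Lyapunov-type weight $\Lambda$ supplied by Lemma~\ref{lem-rang}(ii); the nonnegativity $v^\lambda \geq 0$ is precisely what makes the coupling term in the summed identity absorb into the $F_i$-side.
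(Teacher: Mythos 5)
Your proof is correct and follows essentially the same route as the paper: reduce to the classical case of Proposition~\ref{subsol} for existence, use $0$ and $M/\lambda$ as constant sub/supersolutions for the $L^\infty$ estimate, invoke Lemma~\ref{lem-rang}(ii) with $E_1=\emptyset$, $E_2=\T^N$ to produce the positive weight $\Lambda$ with $D^T\Lambda\ge 0$, multiply and sum the equations, drop the nonnegative coupling and $\lambda$-terms (using $v^\lambda\ge 0$), and conclude via coercivity. Your explicit appeal to Rademacher's theorem to justify summing the equations pointwise a.e.\ is a reasonable and slightly more careful rendering of the same step, which the paper performs implicitly by ``arguing as in the second part of the proof of Lemma~\ref{subsol-lip}.''
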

\begin{proof}
We first observe that the system \eqref{HJLm}
satisfies the assumption \eqref{strict-pos} for all $\l>0$ so
Theorem \ref{confr} and Proposition \ref{subsol} (classical case)
hold. Hence there exists a unique solution $v^\l.$
Moreover, since $\ov u=(M/\l,\dots,M/\l)$, $\underline u=(-M/\l,\dots,-M/\l)$, where
\begin{eqnarray*}
M\ge \sup_{1\leq i\leq m} \sup_{x\in\T^N}|F_i(x,0)| +|f_i(x)|,
\end{eqnarray*}
are, respectively, a super and a subsolution of \eqref{HJLm}, we have
\begin{equation}\label{E1m}
   -\frac{M}{\l}\le v^\l_i\le \frac{M}{\l} \qquad i=1,\dots,m.
\end{equation}
In fact, in our case where $H_i(x,p)=F_i(x,p)-f_i(x)$ with $f_i\geq 0$
and $F_i(x,0)=0,$ we have that $(0,\dots,0)$ is a subsolution
and therefore we obtain the more precise estimate
\begin{equation}\label{vlpos}
 0\le v^\l_i \qquad i=1,\dots,m.
\end{equation}

We want to prove that the $v^\l_i$'s are Lipschitz continuous uniformly with
respect to $\l.$ We argue as in the second part of the proof
of Lemma~\ref{subsol-lip}, using now Lemma~\ref{lem-rang} (ii)
with $E_1=\emptyset$ and $E_2=\T^N,$
since~\eqref{degenerate} is not assumed here.
Equality~\eqref{somequ123} is replaced with
\begin{eqnarray*}
&&\l \sum_{i=1}^m  \Lambda_i(x)v^\l_i + \sum_{i=1}^m \Lambda_i(x)F_i(x,Dv^\l_i)
+ \sum_{i,j=1}^m\Lambda_i(x)d_{ij}(x)v^\l_j=\sum_{i=1}^m \Lambda_i(x)f_i(x)
\end{eqnarray*}
and
\begin{eqnarray*}
\l \sum_{i=1}^m  \Lambda_i(x)v^\l_i
+ \sum_{j=1}^m\left(\sum_{i=1}^m\Lambda_i(x)d_{ij}(x)\right)v^\l_j\geq 0
\end{eqnarray*}
since $D(x)^T\Lambda(x)\geq 0$ and~\eqref{vlpos} holds.
We then conclude as in Lemma~\ref{subsol-lip} that $v^\l$ is Lipschitz
continuous with some constant $C_0=C_0(H_1,\dots, H_m, D).$

Finally, since for all $i,$ $\l v_i^\l$ and $H_i(x,Dv_i^\l)$ are bounded
independently of $\l$ in~\eqref{HJLm}, it is true also for $\sum_{j=1}^m d_{ij}v_j^\l$.
\end{proof}
%%%%%%%%%
The next theorem gives a first set of assumptions under
which we may solve \eqref{HJ}.

%%%%%%%%%%%%%%%%%%%%
\begin{thm}\label{thm-ergodic}
We assume \eqref{H3}, \eqref{Dcst-irr} with $E_2=\T^N$, \eqref{H0}--\eqref{regf}   and
\begin{equation}
 \F=\{x\in \T^N:\sum_{i=1}^m f_i(x)=0\}\not= \emptyset.\label{Fnonvide}
\end{equation}
Let $x^*\in \F$. If $v^\l$ is the solution of \eqref{HJLm}, then, up to extract some subsequence as $\l\to 0,$
\begin{eqnarray*}
& & -\l v^\l\to c=(c_1,\cdots, c_m)\in\R^m,\\
& & v^\l-v^\l(x^*)\to v=(v_1,\dots,v_m) \ { in} \ C(\T^N)
\end{eqnarray*}
and $(c,v)\in\R^m\times C(\T^N)$ is solution to~\eqref{HJ}
with $v$ Lipschitz continuous and $c\in {\rm ker}\,D(x)$ for
all $x.$
Moreover
\begin{eqnarray}
&& v_i(x)=0 \quad
\text{for all } x\in\F,\ 1\leq i\leq m,\label{condV}\\
&& c_i=0\quad \text{for all }1\leq i\leq m,
\end{eqnarray}
and $c=(0,\dots ,0)$ is the unique constant vector in ${\rm ker}\, D(x),$
for all $x\in\T^N,$ such that~\eqref{HJ} has a solution.
\end{thm}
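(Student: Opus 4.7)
My plan is to reduce the entire statement to the single identity $v^\lambda(x)=0$ for every $\lambda>0$ and every $x\in\F$; once this is secured all the remaining assertions follow routinely.

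For the identity itself, Lemma~\ref{reg-vl} gives the uniform Lipschitz bound on $v^\lambda$, the bounds $0\le v^\lambda_i\le M/\lambda$ (the lower bound is crucial and comes from $F_i(\cdot,0)\equiv 0$, $f_i\ge 0$ together with comparison), and Lemma~\ref{lem-rang}, applied by combining part~(i) on $E_1$ and part~(ii) on $\T^N\setminus E_1$, produces a strictly positive continuous function $\Lambda:\T^N\to\R^m$ with $D(x)^T\Lambda(x)\ge 0$ everywhere and equality precisely on $E_1$. Since $v^\lambda$ is Lipschitz the system \eqref{HJLm} holds pointwise almost everywhere, and multiplying the $i$-th equation by $\Lambda_i(x)$ and summing gives
\[
\lambda\sum_i\Lambda_i v^\lambda_i+\sum_i\Lambda_i F_i(x,Dv^\lambda_i)+\sum_j[D(x)^T\Lambda(x)]_j v^\lambda_j=\sum_i\Lambda_i f_i\qquad\text{a.e.\ }x\in\T^N.
\]
On the open super-level set $\F_\epsilon:=\{\sum_i f_i<\epsilon\}\supset\F$ the right-hand side is bounded by $\epsilon\,\|\Lambda\|_\infty$ while each term on the left is nonnegative (using $\Lambda>0$, $v^\lambda\ge 0$, $F_i\ge 0$ and $D^T\Lambda\ge 0$), hence $\lambda\sum_i\Lambda_iv^\lambda_i\le\epsilon\,\|\Lambda\|_\infty$ a.e.\ on $\F_\epsilon$, and by continuity everywhere on $\F_\epsilon$. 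Letting $\epsilon\downarrow 0$ with $\lambda$ fixed yields $\sum_i\Lambda_i(x)v^\lambda_i(x)=0$ on $\F$, and since $\Lambda_i>0$ and $v^\lambda_i\ge 0$ we conclude $v^\lambda_i\equiv 0$ on $\F$ for every~$i$.

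Everything else then falls out quickly. Since $x^*\in\F$ we have $v^\lambda(x^*)=0$ for every $\lambda>0$, so $w^\lambda:=v^\lambda-v^\lambda(x^*)\equiv v^\lambda$ is uniformly bounded and uniformly Lipschitz on $\T^N$; Arzel\`a-Ascoli yields, along a subsequence, $v^\lambda\to v$ in $C(\T^N;\R^m)$ with $v$ Lipschitz. Moreover $\lambda v^\lambda(x^*)=0$ and $|\lambda v^\lambda(x)-\lambda v^\lambda(x^*)|\le \lambda C_0\,\mathrm{diam}(\T^N)\to 0$, so $\lambda v^\lambda\to 0$ uniformly, whence $c:=-\lim\lambda v^\lambda=0$ and the condition $c\in\ker D(x)$ is automatic. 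Viscosity stability passes the equation to the limit, so $(0,v)$ solves \eqref{HJ}, and $v\equiv 0$ on $\F$ is inherited from the identity for~$v^\lambda$. For the uniqueness of $c$: if $E_1\ne\T^N$ then at any $x_0\notin E_1$ the irreducible $M$-matrix $D(x_0)$ has a positive row sum and is therefore invertible (standard $M$-matrix theory), so $\bigcap_x\ker D(x)=\{0\}$ and $c=0$ is the only candidate; if $E_1=\T^N$ then $\bigcap_x\ker D(x)=\mathrm{span}(\mathbf{1})$ and a candidate $c'=k\,\mathbf{1}$ is handled by observing that the equation is now invariant under the addition of a constant vector to $v'$ (because $\sum_j d_{ij}\equiv 0$), so after normalizing $v'\ge 0$ the $\Lambda$-weighted argument of the previous paragraph forces $v'\equiv 0$ on~$\F$ and in turn $k=0$.

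The non-routine point is the passage from the almost-everywhere identity to a pointwise identity on~$\F$: because $\F$ may have empty interior or zero Lebesgue measure, one cannot simply evaluate the equation pointwise on~$\F$. The penalization by the open super-level sets $\F_\epsilon$ described above circumvents this at the price of a correction of order $\epsilon/\lambda$ that is wiped out by sending $\epsilon\downarrow 0$ with $\lambda$ fixed. Everything else is a standard combination of viscosity stability, Arzel\`a-Ascoli and Perron-Frobenius theory for irreducible nonnegative matrices.
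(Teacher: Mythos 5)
Your reduction to the identity $v^\lambda\equiv 0$ on $\F$ is essentially the paper's key step, and your derivation of it is correct: instead of invoking the viscosity subsolution inequality pointwise on $\F$ (which the paper does, implicitly relying on density of points where the superdifferential is nonempty), you work with the a.e.\ pointwise equation on the open super-level sets $\F_\epsilon$ and pass to $\F$ by continuity. This is a cleaner way to handle the possible measure-zero issue, and all the sign reasoning (nonnegativity of $\lambda\sum\Lambda_i v^\lambda_i$, of $\sum\Lambda_i F_i$, of $(D^T\Lambda)\cdot v^\lambda$) is in order. Your subsequent derivations of $c=0$, of $c\in\ker D(x)$, of the Lipschitz limit $v$ by Arzel\`a--Ascoli, and of $v=0$ on $\F$ by stability are also fine, and slightly streamlined compared with the paper, which first proves $c\in\ker D(x)$ abstractly by multiplying the equation by $\lambda$.

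The genuine gap is in your uniqueness argument for $c$ when $E_1=\T^N$. You claim that after normalizing $v'\ge 0$, the $\Lambda$-weighted argument forces $v'\equiv 0$ on $\F$ and hence $k=0$. But that argument crucially used the absorption term $\lambda\sum_i\Lambda_i v^\lambda_i$ in the left-hand side; for the stationary problem satisfied by $(k\mathbf{1},v')$ there is no such term, and indeed the conclusion $v'\equiv 0$ on $\F$ cannot be true, since solutions are invariant under adding any constant multiple of $\mathbf{1}$ when $\sum_j d_{ij}\equiv 0$. What the $\Lambda$-weighted computation actually yields at $x\in\F$ is only
\begin{equation*}
\sum_i\Lambda_i(x)F_i(x,Dv'_i(x))=k\sum_i\Lambda_i(x)\ge 0,
\end{equation*}
i.e.\ $k\ge 0$; the inequality $k\le 0$ is not obtained. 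The paper closes this by a different device: from a solution $(c,v)$ and a hypothetical $(\tilde c,\tilde v)$ with both in $\ker D(x)$, it forms $w(x,t)=v(x)-ct$ and $\tilde w(x,t)=\tilde v(x)-\tilde ct$, which solve the evolution system~\eqref{HJE}, and then applies the parabolic comparison of Proposition~\ref{PrComp} to get $(\tilde c_i-c_i)t\le\text{const}$, forcing $\tilde c_i\le c_i$, and $c=\tilde c$ by symmetry. That argument is uniform in the two cases $E_1=\T^N$ and $E_1\ne\T^N$ and requires no case distinction, whereas your split leaves the degenerate case open. Your handling of $E_1\ne\T^N$ via the invertibility of an irreducible $M$-matrix with a strictly positive row sum is correct and is a legitimate shortcut not present in the paper.
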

%%%%%%%%%%

%%%%%%%%%
\begin{proof}
Fix $x^*\in \T^N$ and let
$w^\l(x)=v^\l(x)-v^\l(x^*).$ From Lemma~\ref{reg-vl},
$w^\l$ is Lipschitz continuous and bounded since
$\T^N$ is bounded. From Ascoli's theorem, up to subsequences,
there exist a constant $c\in\R^m$ and a Lipschitz continuous function $v$
such that
\begin{eqnarray}\label{convl}
-\l v^\l(x^*)\to c \quad {\rm and} \quad w^\l\to v \ {\rm in} \ C(\T^N)
\quad {\rm as} \ \l\to 0.
\end{eqnarray}
Notice that $v$ depends on $x^*$ but not $c$ since, for any $x^*,y^*\in\T^N,$
\begin{eqnarray}\label{c-indep-x}
|-\l v^\l(x^*)+\l v^\l(y^*)|\leq \l C_0 |x^*-y^*|\to 0 \quad {\rm as} \ \l\to 0.
\end{eqnarray}
Moreover, multiplying~\eqref{HJLm} by $\l$ for all $i$ and sending $\l\to 0,$
we obtain $-\sum_j d_{ij}(x)c_i=0$ whichs gives $D(x)c=0$ and therefore
$c\in {\rm ker}\,D(x).$

Let $x\in \F.$ Since $F_i\geq 0$ and $f_i(x)=0,$
we observe that~\eqref{HJLm} implies
\begin{equation}\label{E5m}
    \l v^\l_i(x)+\sum_{j=1}^m d_{ij}(x) v^\l_j(x)\le 0 \qquad i=1,\dots,m.
\end{equation}
By Lemma~\ref{lem-rang} (ii),
there exists a continuous
$\Lambda:\T^N\to\R^m,$ $\Lambda >0$ such that $D(x)^T\Lambda(x)\geq 0$
on $\T^N$ and, since $v^\l_i(x)\geq 0$ by~\eqref{vlpos}, we obtain
\begin{eqnarray*}
\l \sum_{i=1}^m \Lambda_i(x)v_i^\l(x)
\leq\l \sum_{i=1}^m \Lambda_i(x)v_i^\l(x)
+\sum_{j=1}^m v_j^\l(x)\left( \sum_{i=1}^m \Lambda_i(x)d_{ij}(x)\right)
\leq 0.
\end{eqnarray*}
It follows
\begin{eqnarray}\label{vl0F}
v^\l_i=0 \quad {\rm on \ } \F, \quad  i=1,\dots,m.
\end{eqnarray}

From now on, we choose $x^*\in \F.$ From~\eqref{vl0F}, we obtain
that $c=(0,\dots ,0)$ and $v$ vanishes on $\F.$
Observe that $w^\l$ is a solution of the system
\begin{eqnarray*}
\l w^\l_i+  H_i(x,Dw^\l_i)+\sum_{j=1}^m d_{ij}(x)w^\l_j(x)
+ \sum_{j=1}^m d_{ij}(x)v^\l_j(x^*)=0
\quad {\rm in} \ \T^N, \ 1\leq i\leq m,
\end{eqnarray*}
and that the last term in the left-hand side is 0 because
of~\eqref{vl0F}. By~\eqref{convl} and the stability for viscosity solutions
as $\l\to 0,$ we conclude that the couple
$((0,\dots,0),v)$ is a solution of the system \eqref{HJ}.

Suppose that $(c,v)$ and $(\tilde c, \tilde v)$ are two solutions
of~\eqref{HJ} with $c,\tilde c\in  {\rm ker}\, D(x)$ for all $x.$
Define $w(x,t)=v(x)-ct$ and $\tilde w(x,t)=\tilde v(x)-\tilde ct.$
Since $c,\tilde c\in  {\rm ker}\, D(x),$ we have that
$w$ and $\tilde w$ are solutions of~\eqref{HJE} with initial conditions
$v$ and $\tilde v$ respectively. By comparison (see Proposition~\ref{PrComp}),
we get that, for all $x\in\T^N, t\geq 0$ and $1\leq i\leq m,$
\begin{eqnarray*}
v_i(x)-\tilde{v}_i(x) + ({\tilde c}_i-c_i)t \leq
\mathop{\rm max}_{1\leq j\leq m} \mathop{\rm sup}_{\T^N}\, (v_j-\tilde{v}_j)^+.
\end{eqnarray*}
Therefore ${\tilde c}_i\leq c_i.$ Exchanging the role
of $v$ and $\tilde v,$ we obtain that $c=\tilde c.$
\end{proof}
%%%%%%%

We give another case
where we can solve the ergodic problem~\eqref{HJ}.
In particular, note that
$\mathcal{F}$ may be empty.
%%%%%%%%%%%%%%%%%%%%%%%%%%%%%%

%%%%%%%%%%%%%%%%%%%%%%
%%%%%%%%%%%%%%%%%%%%
\begin{thm}\label{thm-ergodic-bis}
We assume that 
\begin{eqnarray}
&& D \text{ does not depend on } x\label{Dcste}
\end{eqnarray}
and that \eqref{H3}, \eqref{degenerate}, \eqref{Dcst-irr}
and \eqref{H0}--\eqref{regf} hold.
Then, there is at least one solution $(c,v)\in\R^m\times C(\T^N)$ to~\eqref{HJ}
with $v$ Lipschitz continuous, $c=(c_1,\dots,c_1)$ is unique
in ${\rm ker}\,D$ and
\begin{eqnarray}\label{caract-c}
\sum_{i=1}^m \Lambda_i \mathop{\rm min}_{\T^N}f_i(x)
\leq -c_1 \sum_{i=1}^m \Lambda_i \leq
\mathop{\rm min}_{\T^N} \sum_{i=1}^m \Lambda_i f_i(x),
\end{eqnarray}
where the constant vector $\Lambda >0$ is given by Lemma~\ref{lem-rang} (i).
\end{thm}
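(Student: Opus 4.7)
The plan is to solve the ergodic problem~\eqref{HJ} by passing to the limit in the discounted approximation~\eqref{HJLm}. By Lemma~\ref{reg-vl}, for each $\lambda\in(0,1)$ there is a unique $v^\lambda$ which is $C_0$-Lipschitz uniformly in $\lambda$, satisfies $0\leq v^\lambda_i\leq M/\lambda$, and $|\sum_j d_{ij}v^\lambda_j|\leq M$. Since $D$ is constant, irreducible and satisfies~\eqref{degenerate}, Lemma~\ref{lem-rang}(i) provides a constant vector $\Lambda>0$ with $D^T\Lambda=0$, and $\mathrm{ker}\,D=\mathrm{span}\{(1,\dots,1)\}$. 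The main obstacle is that each $v^\lambda_i$ typically blows up like $1/\lambda$, so one must renormalize carefully in order to extract a convergent subsequence whose limiting ergodic constant lies in $\mathrm{ker}\,D$.

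The renormalization I would use is a \emph{scalar} shift. Fix $x^*\in\T^N$, set $a^\lambda:=\frac{1}{m}\sum_i v_i^\lambda(x^*)\in\R$ and $\tilde v^\lambda(x):=v^\lambda(x)-a^\lambda(1,\dots,1)$. By construction $\tilde v^\lambda(x^*)$ is orthogonal to $(1,\dots,1)$, while $D\tilde v^\lambda(x^*)=Dv^\lambda(x^*)$ has sup-norm bounded by $M$; since $\mathrm{ker}\,D=\mathrm{span}\{(1,\dots,1)\}$, the restriction of $D$ to $(1,\dots,1)^\perp$ is injective, so $\tilde v^\lambda(x^*)$ is bounded independently of $\lambda$. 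Combined with the Lipschitz bound on $v^\lambda$, $\tilde v^\lambda$ is uniformly bounded in $W^{1,\infty}(\T^N;\R^m)$. Because $D(1,\dots,1)=0$, $\tilde v^\lambda$ satisfies
\[
\lambda \tilde v_i^\lambda + \lambda a^\lambda + H_i(x,D\tilde v_i^\lambda) + \sum_j d_{ij}\tilde v_j^\lambda=0.
\]
Ascoli--Arzel\`a yields a subsequence along which $\tilde v^\lambda\to v$ uniformly in $C(\T^N)$ and $-\lambda a^\lambda\to c_1$, and stability of viscosity solutions shows that $(c,v)$ with $c:=(c_1,\dots,c_1)$ is a solution of~\eqref{HJ} with $v$ Lipschitz; note that $c\in\mathrm{ker}\,D$ automatically, thanks to~\eqref{degenerate}.

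For~\eqref{caract-c} I would argue the two inequalities separately. The upper bound comes from multiplying the limit equation by $\Lambda_i$ and summing: $D^T\Lambda=0$ kills the coupling term, yielding the a.e.\ identity
\[
\sum_i\Lambda_i F_i(x,Dv_i) = \sum_i\Lambda_i f_i(x) + c_1\sum_i\Lambda_i,
\]
and since $F_i\geq 0$ while the right-hand side is continuous, one gets $-c_1\sum_i\Lambda_i\leq\sum_i\Lambda_i f_i(x)$ at every $x$, hence $-c_1\sum_i\Lambda_i\leq\min_x \sum_i\Lambda_i f_i(x)$. The lower bound is the more delicate step. At a minimum point $\underline x_i$ of $v_i$, the viscosity supersolution inequality against the constant test function, combined with $F_i(\cdot,0)=0$, gives $\sum_j d_{ij}v_j(\underline x_i)\geq c_1+f_i(\underline x_i)$. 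Rewriting the left-hand side via~\eqref{degenerate} as $\sum_{j\neq i}d_{ij}(v_j(\underline x_i)-v_i(\underline x_i))$, and using $d_{ij}\leq 0$ together with $v_j(\underline x_i)-v_i(\underline x_i)\geq \min v_j-\min v_i$, one bounds it above by $\sum_j d_{ij}\min v_j$. Multiplying through by $\Lambda_i$, summing, and applying $D^T\Lambda=0$ once more to annihilate the right-hand side produces $c_1\sum_i\Lambda_i+\sum_i\Lambda_i f_i(\underline x_i)\leq 0$, and $f_i(\underline x_i)\geq\min f_i$ then gives $\sum_i\Lambda_i\min f_i\leq -c_1\sum_i\Lambda_i$.

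Finally, uniqueness of $c$ within $\mathrm{ker}\,D$ follows exactly as in Theorem~\ref{thm-ergodic}: given two solutions $(c,v)$ and $(\tilde c,\tilde v)$ with $c,\tilde c\in\mathrm{ker}\,D$, the functions $w(x,t):=v(x)-ct$ and $\tilde w(x,t):=\tilde v(x)-\tilde c t$ both solve the evolution system with initial data $v$ and $\tilde v$, and Proposition~\ref{PrComp} applied as $t\to+\infty$ forces $c=\tilde c$.
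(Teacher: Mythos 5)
Your proof is correct, and the overall strategy (extract a limit from the discounted problem \eqref{HJLm}, then pin down $c_1$ using the left null-vector $\Lambda$) is the paper's as well. The difference is in the renormalization step, which is the genuine technical content of the existence part. The paper subtracts $v^\lambda(x^*)$ componentwise; this makes boundedness of $v^\lambda-v^\lambda(x^*)$ automatic from the uniform Lipschitz estimate, but the shifted function then satisfies \eqref{HJLm} with an extra term $\sum_j d_{ij}v_j^\lambda(x^*)$, which converges to some $\rho\in\R^m$; the paper must then check that $\langle\Lambda,\rho\rangle=0$, hence $\rho\in\mathrm{im}\,D=\Lambda^\perp$, and absorb a preimage $\tilde\rho$ into $v$. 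You instead subtract the scalar shift $a^\lambda(1,\dots,1)$ with $a^\lambda=\frac1m\sum_i v^\lambda_i(x^*)$; since $(1,\dots,1)\in\ker D$ by \eqref{degenerate}, the coupling term is unchanged and the equation for $\tilde v^\lambda$ is already in the form needed to pass to the limit directly, at the cost of having to argue separately that $\tilde v^\lambda(x^*)$ is bounded. You do that via the estimate $|Dv^\lambda(x^*)|\leq M$ from Lemma~\ref{reg-vl} together with the injectivity of $D$ on $(1,\dots,1)^\perp$ (a consequence of the rank-$(m-1)$ statement in Lemma~\ref{lem-rang}(i)). Both routes exploit the same linear-algebraic fact that $D:(1,\dots,1)^\perp\to\Lambda^\perp$ is a bijection, just at different stages; yours is arguably cleaner because the limiting equation needs no post-processing. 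Your derivation of \eqref{caract-c} matches the paper's: the upper bound by multiplying the a.e.\ identity by $\Lambda_i$ and using $F_i\geq0$, the lower bound by testing the supersolution inequality at a minimum of each $v_i$ with a constant and using $D^T\Lambda=0$ (your intermediate rewrite $\sum_j d_{ij}v_j=\sum_{j\neq i}d_{ij}(v_j-v_i)$ is a harmless variant of the paper's direct estimate via $\bar u_j\leq u_j(x_i)$), and the uniqueness of $c$ in $\ker D$ is verbatim.
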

%%%%%%%%%%

%%%%%%%%%
\begin{proof}
Let $v^\l$ be the solution  of~\eqref{HJLm} and fix any
$x^*\in\T^N.$ We argue as in the proof of Theorem~\ref{thm-ergodic}.
From Lemma~\ref{reg-vl} and Ascoli's theorem, there exist
$c=(c_1,\dots,c_m)\in\R^m$ and $v\in C(\T^N)$
such that, up to extract subsequences, for $i=1,\dots,m,$
\begin{eqnarray*}
\l v_i^\l(x^*)\to -c_i
\quad {\rm and} \quad
v^\l_i-v^\l_i(x^*)\to v_i \ {\rm in} \ C(\T^N),
\quad \ {\rm as} \ \l\to 0.
\end{eqnarray*}
Notice that $c_i$ does not depend on the choice
of $x^*$, (see~\eqref{c-indep-x}), and $c\in {\rm ker}\,D.$
We write
\begin{eqnarray}\label{eq432}
&&\l v^\l_i+  H_i(x,D(v^\l_i-v^\l_i(x^*)))+\sum_{j=1}^m d_{ij}(v^\l_j-v^\l_i(x^*))
+ \sum_{j=1}^m d_{ij}v^\l_j(x^*)=0
\quad {\rm in} \ \T^N,
\end{eqnarray}
From~\eqref{estim-vl}, we obtain that some subsequences of both
$\l v_i^\l(x^*)$ and $\sum_{j=1}^m d_{ij}v_j^\l (x^*)$ converge. We call the
second limit $\rho_i=\rho_i(x^*)$.
Using the positive vector $\Lambda$ given by Lemma~\ref{lem-rang} (i),
we have
\begin{eqnarray*}
\sum_{i=1}^m\Lambda_i \sum_{j=1}^m d_{ij}v^\l_j(x^*)
= \sum_{j=1}^m\left(\sum_{i=1}^m\Lambda_i d_{ij}\right)v^\l_j(x^*)
=0.
\end{eqnarray*}
Passing to the limit in the above formula, 
it follows $\langle \Lambda, \rho\rangle =0,$
where $\langle\cdot,\cdot\rangle$
is the standard inner product. Since $D^T \Lambda =0$ and the rank of $D$
is $m-1,$ we get that the image ${\rm im}\,D$ of $D$ is $\Lambda^\perp.$ Thus
$\rho\in {\rm im}\,D$ and
there exists $\tilde{\rho}\in\R^m$ such that
$D\tilde{\rho}=\rho.$ 
We then use the stability result for viscosity solutions and
pass to the limit in~\eqref{eq432}. We get
\begin{eqnarray}
H_i(x,Dv_i)+\sum_{j=1}^m d_{ij}v_j(x)
+ \sum_{j=1}^m d_{ij}\tilde{\rho}_j=c_i
\quad {\rm in} \ \T^N, \ 1\leq i\leq m.
\end{eqnarray}
Then $(c,v(\cdot)+\tilde{\rho})$ is solution to~\eqref{HJ}. The function
$\tilde{v}=v+\tilde{\rho}$ depends on $x^*$ but not $c.$

From Lemma~\ref{lem-rang}, the kernel of $D$ is the line spanned by $(1,\dots,1).$
Thus, any $c\in {\rm ker}\,D$ has the form $(c_1,\dots,c_1).$
The proof of uniqueness of $c$ is the same as the one in Theorem~\ref{thm-ergodic}.

It remains to prove~\eqref{caract-c}.
We use again the vector $\Lambda$ given by
Lemma~\ref{lem-rang} (i).
On the one hand, multiplying~\eqref{HJ} by $\Lambda_i$
and summing them for $i=1,\dots,m,$
we obtain
\begin{eqnarray}\label{1ineg}
0\leq \sum_{i=1}^m \Lambda_i f_i(x)+ c_1\sum_{i=1}^m \Lambda_i,
\qquad x\in\T^N.
\end{eqnarray}
On the other hand, let $x_i\in\T^N$ be a minimum of the continuous
function $u_i$ and set $\bar{u}_i= u_i(x_i).$
 At $x_i,$ the equation~\eqref{HJ} reads
\begin{eqnarray}\label{eq987}
 \sum_{j=1}^m d_{ij}\bar{u}_j \geq \sum_{j=1}^m d_{ij}u_j(x_i)= f_i(x_i)+c_1,
\end{eqnarray}
since $\bar{u}_j\leq u_j(x_i)$ and $d_{ij}\leq 0$ for $i\not=j.$
Multiplying~\eqref{eq987}  by $\Lambda_i$ and summing them for $i=1,\dots,m,$
we get
\begin{eqnarray*}
0=\sum_{j=1}^m \bar{u}_j \sum_{i=1}^m \Lambda_i d_{ij} \geq
\sum_{i=1}^m \Lambda_i(f_i(x_i)+c_1)
\geq \sum_{i=1}^m \Lambda_i(\mathop{\rm min}_{\T^N}f_i+c_1).
\end{eqnarray*}
Combining the previous inequality with~\eqref{1ineg},
we finally obtain~\eqref{caract-c}.
\end{proof}
%%%%%%%

%%%%%%
\begin{rem} \
\begin{enumerate}
\item The inequality~\eqref{caract-c} gives a characterization
of $c$ when all the $f_i$'s achieve the same minimum
at the same point.
\item It would be interesting to prove Theorem~\ref{thm-ergodic-bis}
when the $d_{ij}$'s depend on $x.$ The difficulty is that the $\rho_i$'s
in the proof are now functions on $x$ and we do not obtain anymore
a solution of~\eqref{HJ}.
\end{enumerate}
\end{rem}
%%%%%%

%%%%%%
\begin{ex}
Consider the system
\begin{equation}\label{cplusfnegatif}
\left\{  \begin{array}{lr}
   |Dv^1|+v^1-v^2=f_1+c_1&\text{in $\T^N$},\\
   |Dv^2|-v^1+v^2=f_2+c_2&\text{in $\T^N$},
     \end{array}\right.
\end{equation}
with $f_1\equiv a, f_2\equiv b,$ $a,b>0.$
This system satisfies all the assumptions of Theorem~\ref{thm-ergodic}
with the exception of~\eqref{Fnonvide} since $f_1+f_2\equiv a+b>0.$
However, all the assumptions of Theorem \ref{thm-ergodic-bis}
hold and therefore we can solve~\eqref{cplusfnegatif}.
For instance, we can exhibit constant solutions $(c,v)$ with $c_1=c_2.$
An easy computation gives $c_1=-(a+b)/2$ and all constants
$v=(v_1,v_2)$ satisfying $v_1-v_2=(a-b)/2$ are suitable.
Notice that we can solve~\eqref{cplusfnegatif} even in some cases
where Theorem~\ref{confr} and Proposition~\ref{subsol} do not apply.
Indeed,~\eqref{cplusfnegatif} corresponds to the degenerate
case of  Theorem~\ref{confr} and
either $f_1+c_1$ or $f_2+c_1$ is negative
and~\eqref{regf} does not hold.
\end{ex}
%%%%%

%%%%%%%%%%%%%%%%%%%%%%%%%
%                       %
%%%%%%%%%%%%%%%%%%%%%%%%%
%%%%%%%%%%%%%%%%%%%%%%%%%%%%%%%%%%%%%%%%%%%%%%%%%%%%%%%%%%%%%
%%%%%%%%%%%%%%%%%%%%%%%%%%%%%%%%%%%%%%%%%%%%%%%%%%%%%%%%%%%%%
%%%%%%%%%%%%%%%%%%%%%%%%%%%%%%%%%%%%%%%%%%%%%%%%%%%%%%%%%%%%%
%%%%%%%%%%%%%%%%%%%%%%%%%%%%%%%%%%%%%%%%%%%%%%%%%%%%%%%%%%%%%
\section{Large time behavior}\label{s5}

We are interested in the long-time behavior of the evolutive system
\begin{equation}\label{HJE}
\left\{
  \begin{array}{ll}\displaystyle
\frac{\partial u_i}{\partial t}+  H_i(x, Du_i )+\sum_{j=1}^{m}d_{ij}(x)u_j=0
   & (x,t)\in  \T^N\times (0,+\infty), \\[5pt]
  u_{i}(x,0)=u_{0,i}(x)&x\in \T^N,\ \ i=1, \cdots m,
  \end{array}
\right.
\end{equation}
where $H_i$ is of the form \eqref{Hroquejo} and
\begin{equation}
   \text{$u_{0,i}$ is continuous and $1$-periodic.}\label{HID}
\end{equation}
We start  giving some auxiliary results for the evolutive problem.
The following  two propositions come from~\cite{cll10}, where homogenization
of a   general class of monotone systems
which includes in particular the weakly coupled system~\eqref{HJE}, is studied.
Let us mention that Proposition~\ref{PrComp} is established in~\cite{cll10} 
under the additional assumption
\begin{eqnarray*}
&& \text{There exists a modulus of continuity $\omega$ such that,}\\
&& |F_i(x,p)-F_i(y,p)|\leq \omega((1+|p|)|x-y|)\quad
\text{for all $x,y\in\T^N,$ $p\in\R^N,$ $i=1,\dots,m,$}
\end{eqnarray*}
but a careful examination of the proof shows that we do not need it.
The coercivity of the $F_i$'s is enough (see the proof of Theorem~\ref{confr}).

%%%%%%%%%%%%%%%%%%%%%%%%%
\begin{prop}\label{PrComp}
Assume~\eqref{H3} and~\eqref{Hroquejo}--\eqref{regf}.
\begin{itemize}
  \item [(i)] If $u_0$, $v_0$ are two initial datas satisfying \eqref{HID} and $u$, $v$ are
respectively a viscosity subsolution and a supersolution of~\eqref{HJE}, then
for any $t\geq 0,$
\begin{eqnarray}\label{comp-max}
\mathop{\rm max}_{1\leq i\leq m}\mathop{\rm sup}_{\T^N}\, (u_i(\cdot,t)-v_i(\cdot,t))
\leq \mathop{\rm max}_{1\leq i\leq m}\mathop{\rm sup}_{\T^N}
\left(u_{i} (\cdot,0)-v_{i} (\cdot ,0)\right)^+.
\end{eqnarray}
  \item [(ii)]For any $u_0$ satisfying \eqref{HID}, there exists a unique continuous viscosity solution $u$
of \eqref{HJE}.
\end{itemize}
\end{prop}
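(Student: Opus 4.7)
The plan is as follows. Part (i) is a doubling-of-variables argument, with the twist that one also takes the supremum over the index $i\in\{1,\dots,m\}$; this reduces the cross-coupling to the same monotonicity inequality that drove the proof of Theorem~\ref{confr}. Part (ii) then follows by combining (i) (for uniqueness) with the extension of Perron's method to weakly coupled systems (\cite{el91,ishii92}), applied between explicit sub- and supersolutions.

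For (i), argue by contradiction. Suppose that at some $T>0,$
\[
M_T := \max_i \sup_{\T^N}(u_i(\cdot,T) - v_i(\cdot,T)) \; > \; \max_i \sup_{\T^N}(u_{0,i} - v_{0,i})^+ =: M_0.
\]
For small parameters $\eps,\eta,\sigma>0,$ maximize the penalized functional
\[
\Psi(i,x,y,t,s) := u_i(x,t) - v_i(y,s) - \frac{|x-y|^2}{2\eps^2} - \frac{(t-s)^2}{2\eta^2} - \sigma(t+s)
\]
over $\{1,\dots,m\} \times \T^N \times \T^N \times [0,T]^2$. Compactness of the torus together with upper semicontinuity of $\Psi$ produces a maximizer $(i_0,\bar x,\bar y,\bar t,\bar s)$ satisfying the usual estimates $\bar x-\bar y\to 0,$ $\bar t - \bar s\to 0$ as $\eps,\eta\to 0$. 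Taking $\sigma$ small and using continuity of $u_0,v_0$ forces $\bar t,\bar s > 0,$ so the viscosity sub- and supersolution inequalities for $u_{i_0}$ and $v_{i_0}$ are available there.

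Subtracting those two inequalities yields
\[
2\sigma \; \leq \; H_{i_0}(\bar y, p_\eps) - H_{i_0}(\bar x, p_\eps) + \sum_j \bigl(d_{i_0 j}(\bar y) v_j(\bar y,\bar s) - d_{i_0 j}(\bar x) u_j(\bar x,\bar t)\bigr),
\]
with $p_\eps = (\bar x-\bar y)/\eps^2$. The coercivity of $F_{i_0}$ applied in the subsolution inequality gives an $\eps$-independent bound on $|p_\eps|,$ so uniform continuity of $H_{i_0}$ on compact sets of momenta takes care of the Hamiltonian difference in the limit. Splitting the coupling sum in the standard way isolates the piece $\sum_j d_{i_0 j}(\bar x)(u_j(\bar x,\bar t) - v_j(\bar y,\bar s)),$ the remainder vanishing by continuity of $d_{ij}$ and boundedness of $v_j$ on compact sets. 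Since $i_0$ maximizes, $u_j(\bar x,\bar t) - v_j(\bar y,\bar s) \leq u_{i_0}(\bar x,\bar t) - v_{i_0}(\bar y,\bar s),$ and the latter is at least $M_0>0$; combined with $d_{i_0 j}\leq 0$ off the diagonal and $\sum_j d_{i_0 j}\geq 0$ from~\eqref{H3}, this forces the isolated piece to be nonnegative. Passing to the limit yields $2\sigma \leq 0,$ a contradiction. The delicate point, and the very reason~\cite{cll10} needed an extra modulus-of-continuity hypothesis on $F_i$ in $x,$ is precisely the a priori control of $p_\eps$; here coercivity alone suffices, just as in the proof of Theorem~\ref{confr}.

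For (ii), uniqueness is immediate from (i) applied with $v_0 = u_0$. For existence, choose $C \geq \max_i \|u_{0,i}\|_\infty$ and $K$ larger than $\max_i \|f_i\|_\infty,$ and check directly that $\underline u_i(x,t) = -C-Kt$ and $\ov u_i(x,t) = C + Kt$ are global sub- and supersolutions of~\eqref{HJE} sandwiching the initial data: the verification reduces to inequalities of the form $\pm K \mp f_i + \bigl(\sum_j d_{ij}\bigr)(\pm(C+Kt)) \gtrless 0,$ which hold thanks to $F_i(x,0)=0,$ $f_i\geq 0$ and $\sum_j d_{ij}\geq 0$. Perron's method for weakly coupled monotone systems in the form of~\cite{el91,ishii92} then delivers a viscosity solution lying between $\underline u$ and $\ov u,$ and (i) guarantees that its trace at $t=0$ coincides with $u_0$.
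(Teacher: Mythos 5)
Your proposal is essentially a self-contained fleshing-out of what the paper itself only sketches: the paper defers to~\cite{cll10} and merely remarks that the extra modulus-of-continuity hypothesis there can be replaced by coercivity ``as in the proof of Theorem~\ref{confr}''. Your overall strategy (doubling in $(x,t)$ with the index $i$ included in the maximization, cancelling the time-penalty, and then exploiting $d_{ij}\le 0$ for $j\ne i$ together with $\sum_j d_{ij}\ge 0$ and $u_{i_0}-v_{i_0}>0$) is the standard and correct route, and part~(ii) via Perron between $(\mp C\mp Kt)$ barriers is fine.

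There is one place where you should be more careful, precisely at the step you single out as ``the delicate point''. You write that coercivity in the subsolution inequality gives ``an $\eps$-independent bound on $|p_\eps|$, just as in the proof of Theorem~\ref{confr}''. The analogy with Theorem~\ref{confr} is not quite right: in the stationary case the subsolution inequality reads
$\mu F_{i_0}(\bar x, \cdot)+\sum_j d_{i_0j}(\bar x)\mu u_j(\bar x)\le \mu f_{i_0}(\bar x)$, so every term on the right is bounded on $\T^N$ by $\norm{f}_\infty$, $\norm{D}_\infty$, $\norm{u}_\infty$, and the coercivity bound is absolute. In the evolutionary case, however, the subsolution inequality at $(\bar x,\bar t)$ contains the time-derivative of your test function, namely $(\bar t-\bar s)/\eta^2+\sigma$, so coercivity only yields
\begin{equation*}
F_{i_0}(\bar x,p_\eps)\le \norm{f_{i_0}}_\infty+\sigma+\norm{D}_\infty\norm{u}_\infty+\frac{\abs{\bar t-\bar s}}{\eta^2},
\end{equation*}
and the last term is only $O(1/\eta)$, not $O(1)$. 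Thus the resulting bound $\abs{p_\eps}\le R_\eta$ is $\eta$-dependent. Your argument still closes, because you only need an $\eps$-independent bound for the Hamiltonian and coupling differences to vanish as $\eps\to 0$ at \emph{fixed} $\eta$, and then $2\sigma\le 0$ is already a contradiction; the sole role left for $\eta$ is to force $\bar t,\bar s>0$ (together with continuity of the initial data). But you should make the $\eta$-dependence of $R_\eta$ explicit and order the limits ($\eps\to 0$ at fixed $\eta$) accordingly, since as written the appeal to ``just as in Theorem~\ref{confr}'' suggests a uniform bound that the time-derivative term in fact destroys.
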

%%%%%%%%%%%%%%%%%%%%%%%%%%%%%%%%%%%%%%%%%%%%%%%%%%%%%%%%%%%%%%
A crucial step in the study of the large-time behavior of equations or systems
is to obtain compactness properties of the sequence
$(u(\cdot,t))_{t\geq 0}.$ As for a single Hamilton-Jacobi equation,
it relies on the coercitivity
of the Hamiltonians.
%%%%%%%%
\begin{prop}\label{PrReg}
Under the assumptions of Theorem~\ref{thm-ergodic} or
Theorem~\ref{thm-ergodic-bis},
let $u_0\in W^{1,\infty}(\T^N)$ and $u$ be the solution of \eqref{HJE} with
initial datum $u_0$. Then
\begin{align}
   &|u(x,t)+c t|\le C & x \in\T^N,\,  t\in [0,\infty),\label{reg1}\\
   &|u(x,t)-u(y,s)|\leq L(|x-y|+|t-s|)
& x,y\in\T^N,\,  t,s\in [0,\infty),\label{reg2}
\end{align}
with  $C$, $L$ independent of time,
where $c$ is the ergodic constant given in Theorem~\ref{thm-ergodic}
or~\ref{thm-ergodic-bis}.
It follows that
\begin{eqnarray*}
\frac{u_i(x,t)}{t}\to -c_i \qquad \text{\it uniformly in} \ \T^N  \
{as} \ t\to +\infty, \quad i=1,\dots, m.
\end{eqnarray*}
\end{prop}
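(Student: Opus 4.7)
\medskip
\noindent\textbf{Proof strategy.}
The plan is to reduce everything to the comparison principle (Proposition~\ref{PrComp}(i)) combined with the coercivity of the Hamiltonians. The key observation that unlocks the argument is that if $(c,v)$ solves the ergodic problem~\eqref{HJ} with $c\in {\rm ker}\,D(x)$ for every $x,$ then the shifted function $w(x,t):=v(x)-ct$ is itself a viscosity solution of the evolution system~\eqref{HJE}. Indeed, a direct substitution gives
\[
\partial_t w_i+H_i(x,Dw_i)+\sum_j d_{ij}(x)w_j=-c_i+H_i(x,Dv_i)+\sum_j d_{ij}(x)v_j-t\sum_j d_{ij}(x)c_j=0,
\]
since $D(x)c=0.$ The concluding uniform convergence $u_i(x,t)/t\to -c_i$ will then be an immediate consequence of~\eqref{reg1}.

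For~\eqref{reg1} I would apply Proposition~\ref{PrComp}(i) to the pair $(u,w)$ in both directions. Using that $u_0-v$ is bounded on $\T^N,$ this yields
\[
|u_i(x,t)+c_it-v_i(x)|\leq \max_j|u_{0,j}-v_j|_\infty,
\]
which is~\eqref{reg1} with $C=|v|_\infty+|u_0-v|_\infty.$

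For~\eqref{reg2} my plan is to establish Lipschitz-in-time first, then deduce Lipschitz-in-space from coercivity. To get the time estimate, I would choose $K>0$ large enough (depending on $|Du_0|_\infty,$ $|f_i|_\infty,$ $|d_{ij}|_\infty$ and $\sup_{\T^N\times \overline{B}(0,|Du_0|_\infty)}|H_i|$) so that $u_0(x)\pm Kt$ is respectively a viscosity super/subsolution of~\eqref{HJE}. Here the signs work out because $\sum_j d_{ij}\geq 0,$ so the extra $\pm Kt\sum_j d_{ij}$ term has the favorable sign. Proposition~\ref{PrComp}(i) then gives $|u(x,t)-u_0(x)|\leq Kt;$ combining with the semigroup property (which makes $u(\cdot,t+h)$ the solution starting from $u(\cdot,h)$) and a second application of the comparison principle, $|u(x,t+h)-u(x,t)|\leq Kh$ uniformly in $(x,t).$ Once time Lipschitz is in hand, each $u_i$ is a viscosity subsolution of
\[
F_i(x,Du_i)\leq K+|f_i|_\infty+|D|_\infty|u|_\infty,
\]
and coercivity of $F_i$ combined with the standard argument already used in Lemma~\ref{subsol-lip} (cf.~\cite[Lemma~2.5]{barles94}) delivers a uniform-in-$t$ space Lipschitz constant $L_1.$ The overall constant in~\eqref{reg2} is then $L=\max(K,L_1).$

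The only delicate point is verifying rigorously that $u_0(x)\pm Kt$ is a viscosity super/subsolution when $u_0$ is only $W^{1,\infty}$: at a $C^1$ touching point $(x_0,t_0),$ a time-slice argument forces the spatial gradient of the test function into the sub/superdifferential of $u_0$ at $x_0,$ whose norm is bounded by $|Du_0|_\infty,$ which reduces the check to the choice of $K$ above. Beyond this minor technicality, no new ideas are needed: the argument is the natural evolutive counterpart of the analysis already carried out in Sections~\ref{s3} and~\ref{s4}.
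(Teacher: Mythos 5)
Your proposal is correct and follows essentially the same route as the paper: shift the ergodic solution $(c,v)$ by $-ct$ to obtain a solution of \eqref{HJE} and apply the comparison principle to get \eqref{reg1}, then build the sub/supersolutions $u_0(x)\mp Kt$, use comparison and the semigroup property for the time-Lipschitz bound, and finally invoke coercivity and Lemma~\ref{subsol-lip} for the space-Lipschitz bound. The only cosmetic difference is that the paper pre-shifts $v(x)-ct$ by $\pm(|u_0|_\infty+|v|_\infty)$ to produce explicit sub/supersolutions before comparing, whereas you compare $u$ directly to $v(x)-ct$; both yield the same estimate.
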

%%%%%%%%

%%%
\begin{rem} \label{rem-borne}
Given $u_0$ satisfying~\eqref{HID}, set  $S(t)u_0=u(x,t)$ for $t\geq 0$
where $u$ is the solution of \eqref{HJE} with initial
datum $u_0$. Then  it is easy to see that $S(\cdot)$ generates a nonlinear, 
monotone,  nonexpansive semigroup in
$C(\T^N;\R^m)$.
Under the assumptions of Theorem~\ref{thm-ergodic},
$c=(0,\dots, 0),$ so $u$ is in $L^\infty(\T^N\times [0,+\infty)).$
It follows that $\{S(t)u_0, \, t\geq 0\}$ is relatively
compact in $C(\T^N).$ Therefore, the $\o$-limit set of an initial datum $u_0$ with
respect to the semigroup $S(t)$,
\begin{equation} \label{omega-lim-set}
    \o(u_0)=\{\psi=(\psi_1,\dots,\psi_m)\in C(\T^N):\,
\exists t_n\to\infty \text{ such that } \lim_{n\to\infty}S(t_n)u_0=\psi\}.
\end{equation}
is nonempty.
\end{rem}
%%%

\begin{proof}
The proof is based on the existence of a solution
to the ergodic problem which is used to estimate $u.$
It is classical but we provide the proof for reader's convenience.

Let $(c,v)$ be the solution of~\eqref{HJ}
given by Theorem~\ref{thm-ergodic} or~\ref{thm-ergodic-bis}. Since $c\in {\rm ker} D,$
$w(x,t)=v(x)-ct-|u_0|_\infty-|v|_\infty$ and $\tilde w(x,t)=v(x)-ct+|u_0|_\infty+|v|_\infty$
are respectively viscosity subsolution and supersolution to~\eqref{HJE}.
By comparison~\eqref{comp-max}, it follows that
\begin{equation}\label{UB}
    v(x)-|u_0|_\infty-|v|_\infty \le u(x,t)+ct 
\le v(x)+ |u_0|_\infty+|v|_\infty
\qquad (x,t)\in\T^N\times[0,\infty),
\end{equation}
which proves~\eqref{reg1}.
If we define
\begin{eqnarray*}
C:=\sup\left\{ \left|H_i\left(x,p\right)+\sum_j d_{ij}r_j\right|: \, x\in\R^N,\,
|r|\le | u_0|_\infty,\,  |p|\leq |Du_0|_\infty, \,1\leq i\leq m \right\},
\end{eqnarray*}
it is easy to see that $v^\pm (x,t)=( u_{0,1}(x) \pm Ct,\cdots , u_{0,m}(x) \pm Ct)$
are viscosity subsolution and supersolution of~\eqref{HJE}.
By Proposition \ref{PrComp}, it follows
\begin{eqnarray}\label{comp999}
v^-\leq u \leq v^+ \quad {\rm in} \ \T^N\times [0,+\infty).
\end{eqnarray}
Let $h\geq 0$ and
note that, since the $H_i$'s are independent of $t,$
$u (\cdot ,\cdot +h)$ is still a solution of \eqref{HJE}
with initial data $u (\cdot ,h).$
By \eqref{comp-max} and \eqref{comp999}, we get
for all $i=1,\dots ,m,$ $(x,t)\in \R^N\times [0,+\infty),$
\begin{eqnarray*}
 u_i (x,t+h)-u_i (x,t)
\leq \mathop{\rm max}_{1\leq j\leq m}\mathop{\rm sup}_{\T^N}
\left(u_{j}(\cdot,h)-u_{0,j}\right)^+
\leq Ch,
\end{eqnarray*}
and therefore $u_i$ is Lipschitz continuous with respect to $t$ for every $x$ with
\begin{eqnarray*}
\left|\frac{\partial u_i}{\partial t}\right|_\infty \leq C.
\end{eqnarray*}
with $C$ is independent of $t$.
From   \eqref{HJE} and \eqref{UB}, we obtain, in the viscosity sense,
\begin{eqnarray*}
F_i(x,Du_i) \leq C' \qquad(x,t)\in\T^N\times [0,+\infty).
\end{eqnarray*}
It follows from Lemma~\ref{subsol-lip}
that $u_i$ is Lipschitz continuous in $x$ for every $t$ with
$|Du_i|_\infty \leq L_i$ (with $L_i$ independent of $t$).
\end{proof}
%%%%%%%%%%%%%%

We now state and prove our convergence result in the case of systems,
under the assumptions of Theorem~\ref{thm-ergodic}.

%%%%%%%%%%%%%%%%%%%%%%%%%%%%%%%%%%%%%%%%%%%%%%%%%%%%%%%%%%%%%%%%%
\begin{thm}\label{LTB}
Assume~\eqref{H3}, \eqref{Dcst-irr} with $E_2=\T^N,$
and \eqref{H0}--\eqref{regf}.
Suppose that $\mathcal{A}\not=\emptyset.$
For  every $u_0$ satisfying~\eqref{HID},
there exists a solution $u_\infty$ to \eqref{statioHJ} such that
the solution $u$ of \eqref{HJE} with initial datum $u_0$ satisfies
\[
\lim_{t\to\infty}|u(\cdot,t)-u_\infty|_\infty=0.
\]
Moreover
$(u_\infty)_i =(u_\infty)_j$ on $\A$ for all $i,j.$
\end{thm}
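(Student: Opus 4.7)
The plan is to mimic the scalar argument of Namah--Roquejoffre, using the weighted sum $\sum_i \Lambda_i u_i$ as a Lyapunov function on the set $\A$ (with the weights $\Lambda_i$ from Lemma~\ref{lem-rang}(i)) and combining it with a reduction to a linear ODE on $\A$ for the elements of the $\omega$-limit set. By Proposition~\ref{PrReg}, $\{u(\cdot,t)\}_{t\geq 0}$ is bounded and equi-Lipschitz on $\T^N$ (also equi-Lipschitz in time on compact intervals), hence relatively compact in $C(\T^N;\R^m)$, and the $\omega$-limit set $\omega(u_0)$ is nonempty. Lemma~\ref{lem-rang}(i) applied with $E_1 = \bigcap_i \D_i$ and $E_2 = \T^N$ provides a continuous positive $\Lambda$ on $\bigcap_i \D_i$ satisfying $D(x)^T \Lambda(x) = 0$; summing the equations of~\eqref{HJE} weighted by $\Lambda_i(x)$ and using $f_i = 0$, $\sum_i \Lambda_i d_{ij} = 0$ and $F_i(x,Du_i) \geq 0$ on $\A$, one gets $\partial_t \sum_i \Lambda_i u_i \leq 0$ on $\A$. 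Dini's theorem then upgrades pointwise to uniform convergence, $\sum_i \Lambda_i u_i(\cdot,t) \to G(\cdot)$ on $\A$.

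For any sequence $t_n \to +\infty$ I would extract, using equi-continuity in $t$, a subsequence along which $u(\cdot, t_n+\cdot) \to w(\cdot,\cdot)$ uniformly on compact sets; $w$ is a viscosity solution of~\eqref{HJE} satisfying $\sum_i \Lambda_i w_i(\cdot,t) = G$ on $\A$ for every $t\geq 0$. Differentiating this constancy in $t$ and reinjecting it into the weighted sum of the equations forces $\sum_i \Lambda_i F_i(x,Dw_i) = 0$ on $\A$, hence each $F_i(x,Dw_i)\equiv 0$ there. On $\A$ the system therefore reduces to the linear ODE $\partial_t w + D(x) w = 0$; by Lemma~\ref{lem-rang}(i), $\ker D(x) = \mathrm{span}(1,\dots,1)$ and the nonzero eigenvalues of $D(x)$ have uniformly positive real part on the compact set $\A$, so $w_i(x,t)\to \alpha(x):= G(x)/\sum_i \Lambda_i(x)$ exponentially and uniformly on $\A$ for each $i$. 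To upgrade this to convergence of $u$ itself on $\A$ I would run the argument backwards: any $\psi \in \omega(u_0)$ can be written $\psi = S(T)\psi^{-T}$ with $\psi^{-T}\in \omega(u_0)$ bounded uniformly in $T$; the ODE representation $\psi(x) = e^{-TD(x)}\psi^{-T}(x)$ on $\A$, decomposed along $\ker D(x)\oplus \mathrm{im}\,D(x)$, together with $\Lambda(x)\perp \mathrm{im}\,D(x)$ and $\sum_i \Lambda_i \psi^{-T}_i = G$, forces the projection of $\psi^{-T}(x)$ onto $\ker D(x)$ to equal $(\alpha(x),\dots,\alpha(x))$, and letting $T\to +\infty$ gives $\psi(x) = (\alpha(x),\dots,\alpha(x))$ on $\A$. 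Hence $u_i(\cdot,t)\to \alpha$ uniformly on $\A$ for every $i$.

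To conclude, I would use the half-relaxed limits $u^*(x) = \limsup_{t\to\infty,\,y\to x} u(y,t)$ and $u_*(x) = \liminf_{t\to\infty,\,y\to x} u(y,t)$, which are respectively a bounded viscosity subsolution and a bounded viscosity supersolution of~\eqref{statioHJ} with $u_* \leq u^*$. The previous step yields $u^*_i = u_{*,i} = \alpha$ on $\A$, so in particular $\sum_i \Lambda_i u^*_i = \sum_i \Lambda_i u_{*,i}$ there, and the degenerate comparison Theorem~\ref{confr}(ii) gives $u^* \leq u_*$ on all of $\T^N$. Therefore $u^* = u_* =: u_\infty$ is a continuous viscosity solution of~\eqref{statioHJ} satisfying $(u_\infty)_i = (u_\infty)_j$ on $\A$, and the equi-Lipschitz bound turns the equality $u^* = u_*$ into uniform convergence $u(\cdot,t)\to u_\infty$.

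The main obstacle I expect is the rigorous passage from constancy in $t$ of the single scalar $\sum_i \Lambda_i w_i$ on $\A$ to the componentwise identities $F_i(x,Dw_i) = 0$ on $\A$: summing viscosity equations and reading off the time derivative of a constant are only formal, so one has to argue either through a careful test-function computation or by exploiting the Lipschitz regularity of $w$ via Rademacher, being careful that $\A$ may have empty $N$-dimensional interior. Once this reduction is in hand, the spectral decay on the compact set $\A$ and the backward-in-time argument are comparatively routine.
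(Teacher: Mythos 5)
Your proposal is correct and follows the paper's overall architecture closely: the Lyapunov function $\sum_i\Lambda_i u_i$ on $\A$ (paper's Lemma~\ref{sommeui}), subsequence extraction to a translated solution $w$, the reduction to $F_i(x,Dw_i)=0$ on $\A$ and hence to the linear ODE $\partial_t w + D(x)w=0$ (paper's Lemma~\ref{cvwF}), and the stationary comparison Theorem~\ref{confr}(ii). You also correctly flag the genuine technical obstacle: $\A$ may have empty interior, so passing from constancy of $\sum_i\Lambda_i w_i$ on $\A$ to the pointwise identity $F_i(x,Dw_i)=0$ on $\A$ is not a formal differentiation; the paper handles this with the mollified integral argument in the proofs of Lemmas~\ref{sommeui} and~\ref{cvwF}, which is essentially the test-function approach you hint at.

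Where you depart from the paper is the final transfer from the subsequential limit $w$ back to the full orbit $u(\cdot,t)$. The paper (Step 6) first proves $w(\cdot,t)\to u_\infty$ on all of $\T^N$ via half-relaxed limits of $w$ plus Theorem~\ref{confr}, and then transports this to $u$ using nonexpansiveness of the semigroup: $|u(\cdot,t_n+t)-w(\cdot,t)|\le|u(\cdot,t_n)-w(\cdot,0)|\to 0$. You instead argue entirely inside the $\omega$-limit set $\omega(u_0)$: any $\psi\in\omega(u_0)$ admits a backward extension $\psi=S(T)\psi^{-T}$ with $\psi^{-T}\in\omega(u_0)$, and on $\A$ the linear ODE gives $\psi(x)=e^{-TD(x)}\psi^{-T}(x)$; decomposing along $\ker D(x)\oplus\operatorname{im}D(x)$ (a direct sum since $0$ is a simple eigenvalue), using $\Lambda(x)\perp\operatorname{im}D(x)$ to pin the kernel component to $\alpha(x)=G(x)/\sum_i\Lambda_i(x)$, and letting $T\to\infty$ kills the image component by the uniform spectral gap on the compact $\A$. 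This yields $\psi=\alpha\mathbf{1}$ on $\A$ for every $\psi\in\omega(u_0)$, hence uniform convergence of $u(\cdot,t)$ on $\A$, and you then apply the half-relaxed limits to $u$ directly rather than to $w$. This is a more dynamical-systems-flavored argument; it avoids the explicit nonexpansiveness estimate at the price of the backward-extension lemma and a uniform bound on $\|e^{-TD(x)}P_{\operatorname{im}D(x)}\|$ over $\A$ (which holds by compactness and continuity of the Perron projection $I-\mathbf{1}\Lambda(x)^T/\langle\Lambda(x),\mathbf{1}\rangle$). Both routes are valid; the paper's Step 6 is more elementary, yours is conceptually cleaner in that it characterizes the whole $\omega$-limit set on $\A$ at once.
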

%%%%%%%%%%%%%%%%%%%%%%%%%%%%%%%%%%%%%%%%%%%%%%%%%%%%%%%%%%%%%%%%%

\begin{proof} The proof is divided in several steps. \\
\noindent{\it Step 1. Reduction to Lipschitz initial datas.}
Given $u_0$ satisfying~\eqref{HID}, set  $S(t)u_0=u(x,t).$  
Since the semigroup $S(t)$ is nonexpansive, see Remark~\ref{rem-borne},
it is sufficient to show the result for $u_0\in W^{1,\infty}(\T^N)$.
\smallskip

\noindent{\it Step 2. A positive linear combination of the $u_i$'s
is nonincreasing on $\mathcal{A}.$}
Since~\eqref{degenerate} holds with $E_1=\mathcal{A},$
from~\eqref{Dcst-irr} and Lemma~\ref{lem-rang},
there exists a positive continuous function
$\Lambda=(\Lambda_1,\dots,\Lambda_m):\mathcal{A}\to \R^m$
such that $D(x)^T\Lambda(x)=0$ on $\mathcal{A}.$
By multiplying the equations~\eqref{HJE} by $\Lambda_i>0$
and summing for $i=1,\dots,m,$ we obtain
\begin{eqnarray}\label{LTB2}
&& \frac{\partial}{\partial t}( \sum_{i=1}^m \Lambda_iu_i)
+ \sum_{i=1}^m \Lambda_i F_i(x,Du_i)
+  \sum_{j=1}^m\left(\sum_{i=1}^m \Lambda_id_{ij}\right) u_j
= \sum_{i=1}^m \Lambda_i f_i
\quad {\rm in} \ \A\times (0,+\infty),
\end{eqnarray}
in the viscosity sense. Moreover, since $u$ is Lipschitz continuous
(Proposition \ref{PrReg}),~\eqref{LTB2} holds almost everywhere.
Formally, since $F_i\geq 0,$ $D(x)^T\Lambda(x)=0$
and $f_i(x)=0$ for $x\in\mathcal{A},$
it follows that
\begin{equation*}
\frac{\partial}{\partial t}(\sum_{i=1}^m \Lambda_iu_i)\le
0 \qquad {\rm in} \ \mathcal{A}\times (0,+\infty),
\end{equation*}
and $\sum_{i=1}^m \Lambda_iu_i(\cdot,x)$ is nonincreasing in $\mathcal{A}.$
More precisely, we have: 
%%%%%
\begin{lem} \label{sommeui}
There exists a Lipschitz continuous $\phi: \mathcal{A}\to \R$
such that
\begin{equation}\label{LB7}
 \sum_{i=1}^m \Lambda_i u_i(\cdot ,t) \ \downarrow \
\phi \quad
\text{uniformly on $\mathcal{A}$  as $t\to +\infty.$}
\end{equation}
\end{lem}
%%%%%
\noindent The proof is postponed.
\smallskip

\noindent{\it Step 3. Uniform convergence of a subsequence of $u.$}
Notice that the assumptions of Theorem~\ref{thm-ergodic} hold.
It follows that there exists a solution to the ergodic problem~\eqref{HJ}
and therefore, from Proposition~\ref{PrReg} (see also Remark~\ref{rem-borne}),
$(u(\cdot,t))_{t\geq 0}$ is relatively compact in $C(\T^N)$ and there exists
a sequence $t_n\to+\infty$ such that $u(\cdot,t_n)$ converges uniformly
on $\T^N$ as $n\to +\infty.$ From~\eqref{comp-max}, we obtain that
for all $n,q\in\N,$
\begin{eqnarray*}
\mathop{\rm max}_{1\leq i\leq m}\mathop{\rm sup}_{\T^N \times [0,+\infty)}\, 
|u_i(\cdot,t_n+\cdot)-u_i(\cdot,t_q+\cdot)|
\leq 
\mathop{\rm max}_{1\leq i\leq m}\mathop{\rm sup}_{\T^N}\, 
|u_i(\cdot,t_n)-u_i(\cdot,t_q)|
\end{eqnarray*}
and therefore $(u(\cdot,t_n+\cdot))_{n}$ is a Cauchy sequence in
$W^{1,\infty}(\T^N\times [0,+\infty)).$ Thus it converges uniformly to
some function $w\in W^{1,\infty}(\T^N\times [0,+\infty)).$
By the stability of viscosity solutions, $w$
is a viscosity solution of~\eqref{HJE} (see~\cite{ bcd97,barles94, el91} for details).
\smallskip

\noindent{\it Step 4. Uniform convergence of the sequence $w(\cdot,t)$
on $\A.$} The reason of introducing $w$ in Step 3 is that, from
Lemma~\ref{sommeui}, we have
\begin{eqnarray}\label{wcstA}
\sum_{i=1}^m \Lambda_i w_i(\cdot ,t) =\phi\quad
\text{on } \mathcal{A} \text{ for all } t\in[0,+\infty). 
\end{eqnarray}
Surprisingly, this is enough to prove the convergence of each $w_i$
on $\mathcal{A}$:
%%%%%
\begin{lem} \label{cvwF}
The function $w(\cdot,t)$ converges
uniformly on $\mathcal{A}$ to a function $u_\infty(\cdot)$ which is
Lipschitz continuous on  $\mathcal{A}.$
\end{lem}
%%%%
\noindent
The complete proof is postponed, we only outline the main ideas here.
At first, from~\eqref{wcstA}, we get
\begin{equation*}
\frac{\partial}{\partial t}(\sum_{i=1}^m \Lambda_iw_i)=
0 \qquad {\rm in} \ \mathcal{A}\times (0,+\infty).
\end{equation*}
Then, writing~\eqref{LTB2} for $w$ and using in addition $D(x)^T\Lambda(x)=0$
and $f_i=0$ on $\mathcal{A}$ 
we obtain
\begin{eqnarray*}
 \sum_{i=1}^m \Lambda_i F_i(x,Dw_i)=0\qquad {\rm in} \ \mathcal{A}\times (0,+\infty).
\end{eqnarray*}
Thus, for all $i,$ $F_i(x,Dw_i)=0$ on $\A\times (0,+\infty).$
It follows that, for fixed $x\in \A,$ the system~\eqref{HJE} reduces to
a linear differential system in $\R^m,$
\begin{eqnarray*}
\frac{\partial w}{\partial t}(x,t)+D(x)w(x,t)=0 \qquad t\in [0,+\infty).
\end{eqnarray*}
The solution is given by
\begin{eqnarray*}
w(x,t)= {\rm exp}(-tD(x))w(x,0).
\end{eqnarray*}
Since $D$ is an irreducible $M$-matrix, it has 0 as a simple eigenvalue
and all the other eigenvalues have positive real part. It follows 
that both ${\rm exp}(-tD(x))$ and $w(x,t)$ converge as $t\to +\infty.$
\smallskip

\noindent{\it Step 5. Convergence of the whole sequence $(w(\cdot,t))_{t\geq 0}$
on $\T^N.$}
Since $w$ is bounded in $\T^N\times [0,+\infty),$ we can introduce
the relaxed half-limits
\begin{eqnarray}\label{rhl}
&&   \ov w(x)= (\limsup_{t\to +\infty}\phantom{ }^*\, w)(x)
= \mathop{\rm lim}_{t\to +\infty}\;
\mathop{\rm sup} \{ w(y,s) \; : \; y\in B(x,1/t), \; s\geq t \},
\\\nonumber
&&   \underline w(x)=(\liminf_{t\to +\infty}\phantom{ }_*\,w)(x)
= \mathop{\rm lim}_{t\to +\infty}\;
\mathop{\rm inf} \{ w(y,s) \; : \; y\in B(x,1/t), \; s\geq t \},
\end{eqnarray}
where the half-limits are taken componentwise.
By the stability of viscosity solutions, $\ov w$ and
$\underline w$
are respectively a sub and a supersolution of~\eqref{HJ}
with $c=0$. From Step 4, we have 
\begin{eqnarray*}
\ov w_i = \underline w_i= (u_\infty)_i
\quad \text{ on } \mathcal{A} \text{ for all } i=1,\dots, m.
\end{eqnarray*}
It follows that
\begin{eqnarray}\label{LTB9}
 \sum_{i=1}^m \Lambda_i\ov w_i = \sum_{i=1}^m \Lambda_i\underline w_i=\phi
\qquad x\in\mathcal{A}.
\end{eqnarray}
Applying Theorem~\ref{confr} (degenerate case), thanks to~\eqref{LTB9},
we obtain $\ov w\leq \underline w$ in $\T^N$ and therefore
$\ov w=\underline w=:u_\infty$ is the unique continuous viscosity
solution of~\eqref{statioHJ} such that $\sum_{i=1}^m  \Lambda_i(u_\infty)_i=\phi$ on
$\mathcal{A}.$
This gives the convergence of $w(\cdot ,t)$ to $u_\infty$ in $C(\T^N)$ and this latter
function is Lipschitz continuous. 
\smallskip

\noindent{\it Step 6. Convergence of $(u(\cdot,t))_{t\geq 0}.$}
It remains to prove that $u(\cdot,t)$ converges to $u_\infty$ as $t\to +\infty.$
We proceed as in~\cite{bs00}. Since $u$ and $w$ are both solutions
of~\eqref{HJE}, from Proposition~\ref{PrComp}, we have
\begin{eqnarray*}
\mathop{\rm max}_{1\leq i\leq m}\mathop{\rm sup}_{\T^N} |u_i(\cdot,t_n+t)-w_i(\cdot,t)|
\leq \mathop{\rm max}_{1\leq i\leq m}\mathop{\rm sup}_{\T^N}
|u_{i} (\cdot,t_n)-w_{i} (\cdot ,0)| =o_n(1),
\end{eqnarray*}
where $o_n(1)\to 0$ as $n\to +\infty$
since $u (\cdot,t_n+\cdot)$ converges uniformly to $w(\cdot,\cdot).$
It follows that, for all $1\leq i\leq m,$ $x\in\T^N$ and $t\geq 0,$
\begin{eqnarray*}
w(x,t)-o_n(1)\leq u(x,t_n+t)\leq w(x,t)+o_n(1).
\end{eqnarray*}
Taking the relaxed half-limits in the above inequality, we obtain
\begin{eqnarray*}
u_\infty(x)-o_n(1)
= (\liminf_{t\to +\infty}\phantom{ }_* w)(x)-o_n(1)
\leq (\liminf_{t\to +\infty}\phantom{ }_* u)(x)
\end{eqnarray*}
and
\begin{eqnarray*}
(\limsup_{t\to +\infty}\phantom{ }^* u)(x)
\leq  (\limsup_{t\to +\infty}\phantom{ }^* w)(x)+o_n(1)
= u_\infty(x)+o_n(1).
\end{eqnarray*}
Sending $n$ to $+\infty$ implies
\begin{eqnarray*}
(\liminf_{t\to +\infty}\phantom{ }_* u)(x)
= (\limsup_{t\to +\infty}\phantom{ }^* u)(x)= u_\infty(x)
\end{eqnarray*}
which proves the uniform convergence of $u(\cdot,t)$ to $u_\infty$
as $t\to +\infty.$
\smallskip

\noindent{\it Step 7. All the $(u_\infty)_i$'s are equal on $\A.$}
Using the previous arguments, it is straightforward to see
that $D(x)u_\infty(x)\leq 0$ for all $x\in\A.$ 
From~\eqref{degenerate}, it follows
that $u_\infty(x)\in {\rm ker}\, D(x)$ and therefore, from
Lemma~\ref{lem-rang},
$(u_\infty)_i =(u_\infty)_j$ on $\A$ for all $i,j.$
\end{proof}
%%%%%%%%%%%

%%%
\begin{rem} \label{rmq-thm-cv} \
\begin{enumerate}

\item Let us mention that there is an easy version of the convergence
theorem when~\eqref{H3}, \eqref{H0}--\eqref{regf} and \eqref{strict-pos} holds. In this
case,~\eqref{HJ} has a unique Lipschitz continuous solution $u$ with $c=0.$
The solution of~\eqref{HJE}
is bounded and we define the relaxed half-limits as in~\eqref{rhl}.
The classical case of Theorem~\ref{confr} yields $\overline{u}=\underline{u}=:u_\infty$
and gives the convergence. Actually, in this case, $u_\infty=0.$

\item  Assuming $\mathcal{A}\not=\emptyset,$ means that
all the $f_i$'s are zero at least at a common
point (see~\eqref{aubry} and Remark~\ref{minfi}). If~\eqref{degenerate}
holds with $E_1=\T^N$
(i.e., if $\cap_i \mathcal{D}_i=\T^N$ in~\eqref{aubry}), then
we can replace this assumption with
\begin{eqnarray}
&& \exists \bar{f}\geq 0 \text{ such that } \mathop{\rm min}_{\T^N} f_i =\bar{f}
\text{ and } \mathop{\bigcap}_{1\leq i\leq m} {\rm argmin}\, f_i \not=\emptyset.
\label{nouveauA}
\end{eqnarray}
Indeed, using~\eqref{degenerate}, we recover the framework of
Theorem~\ref{LTB} by replacing $u_i(x,t)$ with
$\tilde{u}_i(x,t)=u(x,t)-\bar{f}t$ and $f_i$ with
$\tilde{f}_i-\bar{f}\geq 0.$
In a future work, we aim at studying the large time behavior of the
system~\eqref{HJE} when $\mathcal{A}=\emptyset.$

\item With the same kind of arguments as in Lemma~\ref{sommeui}, we can
prove that
\begin{eqnarray}\label{maxcv}
\mathop{\rm max}_{1\leq j\leq m} u_j(x,t)  \ \downarrow \
\tilde\phi \quad
\text{uniformly on $\mathcal{A}$  as $t\to +\infty.$}
\end{eqnarray}
The rough idea is that, if
$u_i(x,t)={\rm max}_{1\leq j\leq m} u_j(x,t)$ for $(x,t),$
then
\begin{eqnarray*}
\sum_{j=1}^m d_{ij}u_j(x,t)\geq 0
\end{eqnarray*}
and therefore, using the $i$th equation, we get
$\displaystyle \frac{\partial u_i}{\partial t}(x,t)\leq 0$
on $\mathcal{A}.$
Note that Lemma~\ref{sommeui} and~\eqref{maxcv} give an alternative
proof for the convergence theorem~\ref{LTB} when $m=2$ since
these two convergences are enough to imply the convergence
of $u_1$ and $u_2$ on $\mathcal{A}.$
This approach is used in~\cite{mt11} to obtain the convergence
result for two equations in a very similar setting.

\item Formula~\eqref{wcstA} means that 
every element $\psi$ of the $\omega$-limit set
$\o(u_0)$ given by~\eqref{omega-lim-set} satisfies
\begin{equation}\label{LT8}
\sum_{i=1}^m   \Lambda_i \psi_i=\phi\qquad {\rm on} \ \A.
\end{equation}
Moreover, since $\o(u_0)$ is positively invariant for the semigroup,  
for every $\psi\in \o(u_0)$
the restriction of  $S(t)\psi$ to $\F$ is constant in time.

\end{enumerate}
\end{rem}
%%%

%%%
\begin{ex}\label{contre-ex1}
Extensions to Theorem~\ref{LTB} are not easy to obtain. When the assumptions
of the previous theorem are not satisfied, the convergence  is not always true.
The following example is similar to the one in \cite{bs00}.
Consider
\begin{equation}\label{ex1}
\left\{  \begin{array}{lr}
   \displaystyle\frac{\partial u_1}{\partial t}+|Du_1+\a|+u_1-u_2=\a
&\text{in $\R\times (0,\infty),$}\\[3mm]
   \displaystyle\frac{\partial u_2}{\partial t}+|Du_2+\b|-u_1+u_2=\b
&\text{in $\R\times (0,\infty),$}\\[3mm]
   u_1(x,0)=u_2(x,0)=\sin(x).
   \end{array}\right.
\end{equation}
If $\a$, $\b>1$, then the unique solution of \eqref{ex1} is
$u(x,t)=(\sin(x-t),\sin(x-t))$ which clearly does not converge as $t\to \infty$.
In this case, \eqref{H0}, \eqref{regf}, \eqref{degenerate}
and~\eqref{Dcst-irr} hold but~\eqref{H1} fails and $\mathcal{A}=\emptyset.$
Moreover, notice that the assumptions of Theorem~\ref{thm-ergodic-bis}
hold. The ergodic problem has a solution $(c,v)$ with $c=(0,0)$
and $v=(C,C)$ where $C$ is any constant.
\end{ex}
%%%

\begin{proof}[Proof of Lemma \ref{sommeui}]
The proof is inspired
from the corresponding one for a scalar equation in~\cite{nr99}.
We set
\begin{eqnarray*}
U=\sum_{i=1}^m \Lambda_iu_i
\quad {\rm and} \quad
\Phi= \sum_{i=1}^m \Lambda_if_i - \sum_{j=1}^m\left(\sum_{i=1}^m \Lambda_id_{ij}\right) u_j
\end{eqnarray*}
and  $\omega(\cdot)$ is a modulus of continuity
for $\Phi(\cdot,t)$ on the compact set $\T^N.$
Note that the modulus is independent of $t$ because
of~\eqref{reg2}.
Let $x\in \mathcal{A}$ and consider the cube $B_\eps(x)=[x-\eps,x+\eps]^N$
for $\eps>0.$
Noticing that, for all $t\geq 0,$ 
$\Phi(x,t)=0$ by the very definition of $\mathcal{A}$ and $\Lambda,$
we have
\begin{eqnarray*}
 \frac{1}{\eps^n}\int_{B_\eps(x)} [U(y,t+h)-U(y,t)]dy
&=&\frac{1}{\eps^n}\int_{B_\eps(x)\times[t,t+h]}  \frac{\partial U}{\partial t}(s,y)ds\,dy\\
&\leq & \frac{1}{\eps^n} \int_{B_\eps(x)\times[t,t+h]} \Phi(y,s)ds\,dy\\
&\leq& h\omega(\eps).
\end{eqnarray*}
Sending $\eps\to 0$ and using the continuity of $U$, we get $U(x,t+h)\le U(x,t)$ for any $h>0$.
\end{proof}

%%%%%%%%%%%%%%%%%%%%%%%%%%%%%%%%%%%%%%%%%%%%%%%%%%%%%%%%%%%%
%%%%%%%%%%%%%%%%%%%%%%%%%%%%%%%%%%%%%%%%%%%%%%%%%%%%%%%%%%%%
\begin{proof}[Proof of Lemma \ref{cvwF}]
Arguing as in Step 2 of the proof of Theorem~\ref{LTB},
since $w$ is solution to~\eqref{HJE}, we have
\begin{eqnarray*}
&& \frac{\partial}{\partial t}( \sum_{i=1}^m \Lambda_iw_i)
+ \sum_{i=1}^m \Lambda_i F_i(x,Dw_i)
+  \sum_{j=1}^m\left(\sum_{i=1}^m \Lambda_id_{ij}\right) w_j
= \sum_{i=1}^m \Lambda_i f_i
\end{eqnarray*}
almost everywhere in $\T^N\times (0,+\infty).$

From now on, we fix $\bar x\in\A.$
Setting
\begin{eqnarray*}
W=\sum_{i=1}^m \Lambda_iw_i,\quad
\Psi = \sum_{i=1}^m \Lambda_i F_i(y,Dw_i)
\quad {\rm and} \quad
\Phi= \sum_{i=1}^m \Lambda_if_i - \sum_{j=1}^m\left(\sum_{i=1}^m \Lambda_id_{ij}\right) w_j
\end{eqnarray*}
and integrating as in the proof of Lemma~\ref{sommeui}, we have
\begin{eqnarray}\label{form000}
&& \frac{1}{\eps^n}\int_{B_\eps(\bar x)} [W(y,t+h)-W(y,t)]dy\\
&=&\frac{1}{\eps^n}\int_{B_\eps(\bar x)\times[t,t+h]}  
\frac{\partial W}{\partial t}(y,s)ds\,dy\nonumber\\
&=& \frac{1}{\eps^n}\int_{B_\eps(\bar x)\times[t,t+h]}\Phi(y,s)ds\,dy
- \frac{1}{\eps^n}\int_{B_\eps(\bar x)\times[t,t+h]}\Psi(y,s)ds\,dy. 
\nonumber
\end{eqnarray}
As in Lemma~\ref{sommeui}, since 
$\bar x\in\A,$ 
\begin{eqnarray*}
\frac{1}{\eps^n}\int_{B_\eps(\bar x)\times[t,t+h]}\Phi(y,s)ds\,dy= o_\eps (1),
\end{eqnarray*}
where $o_\eps (1)\to 0$ as $\eps\to 0$ uniformly with respect to $t.$
Since $W$ does not depend on $t$ on $\A$ from~\eqref{wcstA}, we have,
for all $y\in B_\eps(\bar x),$
\begin{eqnarray*}
|W(y,t+h)-W(y,t)|\leq |W(y,t+h)-W(\bar x,t+h)|+|W(\bar x,t)-W(y,t)|= o_\eps (1),
\end{eqnarray*}
using the uniform continuity of the $d_{ij}$'s on $\T^N$ 
and the Lipschitz continuity
of the $w_i$'s with respect to $y\in\T^N$ uniformly in $t.$
It follows from~\eqref{form000} that
\begin{eqnarray*}
\frac{1}{\eps^n}\int_{B_\eps(\bar x)\times[t,t+h]}\Psi(y,s)ds\,dy = o_\eps (1)
\end{eqnarray*}
and therefore, for all $1\leq i\leq m,$
using the uniform continuity of $\Lambda_i$
on $\T^N$ and the boundedeness of $F_i(y,Dw_i),$
we have
\begin{eqnarray}\label{Fegal0}
\frac{1}{\eps^n}\int_{B_\eps(\bar x)\times[t,t+h]}
F_i(y,Dw_i(y,s))ds\,dy= o_\eps (1).
\end{eqnarray}

Let $1\leq i\leq m$ and consider the $i$th equation:
\begin{eqnarray*}
&& \frac{\partial w_i}{\partial t}
+ F_i(y,Dw_i)
+  \sum_{j=1}^md_{ij} w_j
= f_i
\quad {\rm almost \ everywhere \ in } \ \T^N\times (0,+\infty).
\end{eqnarray*}
Integrating as above around $\bar x\in\A,$ we get
\begin{eqnarray*}
&& \frac{1}{\eps^n}\int_{B_\eps(\bar x)} (w_i(y,t+h)-w_i(y,t))dy
+ \frac{1}{\eps^n}\int_{B_\eps(\bar x)\times[t,t+h]}\sum_{j=1}^md_{ij} w_j\,ds\,dy\\
=
&&\frac{1}{\eps^n}\int_{B_\eps(\bar x)\times[t,t+h]}\frac{\partial w_i}{\partial t}
ds\,dy
+ \frac{1}{\eps^n}\int_{B_\eps(\bar x)\times[t,t+h]}\sum_{j=1}^md_{ij} w_j\,ds\,dy\\
&=&\frac{1}{\eps^n}\int_{B_\eps(\bar x)\times[t,t+h]} f_i(y)ds\,dy
- \frac{1}{\eps^n}\int_{B_\eps(\bar x)\times[t,t+h]}
F_i(y,Dw_i(y,s))ds\,dy\\
&=& o_\eps (1)
\end{eqnarray*}
from~\eqref{Fegal0} and since $\bar x\in \A.$
Using the continuity of $w_i$ and $d_{ij}$ and sending $\eps$ to 0,
we get, for all $t,h> 0,$
\begin{eqnarray*}
w_i(\bar x,t+h)-w_i(\bar x,t)+ \int_{t}^{t+h}\sum_{j=1}^md_{ij}(\bar x) w_j(\bar x,s)ds=0,
\quad 1\leq i\leq m.
\end{eqnarray*}
Recalling that $t\mapsto w(\bar x,t)$ is Lipschitz continuous, we obtain
that $w$ is solution to the linear differential system
\begin{eqnarray*}
\frac{\partial w}{\partial t}(\bar x,t)+ D(\bar{x})w(\bar{x},t)=0
\quad {\rm almost \ everywhere \ for } \ t\in (0,+\infty).  
\end{eqnarray*}
The unique solution of this system is
\begin{eqnarray*}
w(\bar{x},t)= {\rm exp}(-tD(\bar{x}))w(\bar{x},0).
\end{eqnarray*}
Since $D(\bar{x})$ is an irreducible $M$-matrix, $0$ is a simple
eigenvalue and all the nonzero eigenvalues have a positive real
part. It follows that there exists a matrix $A(\bar x)$ such that
${\rm exp}(-tD(\bar{x}))= A(\bar x)+O({\rm e}^{-rt})$
where $r>0$ is the smallest real part of the nonzero eigenvalues.
Therefore, 
\begin{eqnarray*}
w(\bar{x},t) \mathop{\to}_{t\to +\infty} A(\bar x)w(\bar{x},0)
=\mathop{\rm lim}_{n\to +\infty} A(\bar x)u(\bar{x},t_n) =: u_\infty(\bar{x}).
\end{eqnarray*}
Since $w\in W^{1,\infty}(\T^N\times [0,+\infty)),$ we obtain
that $u_\infty\in  W^{1,\infty}(\T^N).$
\end{proof}
%                                        %
%%%%%%%%%%%%%%%%%%%%%%%%%%%%%%%%%%%%%%%%%%
\section{The control-theoretic interpretation}\label{sec:control}

At least when the coefficients $d_{ij}$ of the coupling matrix $D$
are constant, we can give an interpretation of our system
of Hamilton-Jacobi equations~\eqref{HJE} in terms of optimal
control of hybrid systems with pathwise deterministic
trajectories with random switching.
We do not give the proofs here, we refer the readers to
Fleming and Zhang~\cite{fz98},
Yong and Zhou~\cite{yz98} and the references therein.

Consider the controlled random evolution process
$(X_t,\nu_t)$ with dynamics
\begin{equation}\label{REV}
\left\{
\begin{array}{l}
\dot X_t =  b_{\nu_t}(X_t ,a_t), \ \ t> 0,\\
(X_0,\nu_0)=(x,i) \in \T^N\times \{1,\dots,m\},
\end{array}
\right.
\end{equation}
where the control law  $a:[0,\infty)\to A$ is a measurable function
($A$ is a compact subset of some metric space),
$b_i\in L^\infty(\T^N\times A; \R^N),$  satisfies
\begin{eqnarray}
|b_i(x,a)-b_i(y,a)|\leq C|x-y|,\qquad x,y\in\T^N, \ a\in A, \ 1\leq i\leq m.
\end{eqnarray}
For every $a_t$ and matrix of probability transition
$G=(\g_{ij})_{i,j}$ satisfying $\sum_{j\not= i}\g_{ij}=1$
for $i\not= j$ and $\g_{ii}=-1,$
there exists a solution $(X_t,\nu_t),$ where $X_t:[0,\infty)\to \T^N$
is piecewise $C^1$ and $\nu(t)$  is a continuous-time Markov chain
with state space $\{1,\dots,m\}$ and probability transitions given by
\[
\P\{\nu_{t+\Delta t}=j\,|\,
\nu_t=i\}=\g_{ij}\Delta t+O(\Delta t)
\]
for $j\neq i.$

We introduce the value functions of the optimal control problems
\begin{equation}\label{Value}
    u_i(x,t)=\inf_{a_t\in L^\infty([0,t],A)}\E_{x,i}\{\int_0^t f_{\nu_s}(X_s)ds
+u_{0,\nu_t} (X_t)\},
\quad i=1,\dots m,
\end{equation}
where $\E_{x,i}$ denote the expectation of a trajectory starting at
$x$ in the mode $i,$ $f_i, u_{0,i}:\T^N\to \R$ are continuous and
$f_i\geq 0.$

It is possible to show that the following dynamic programming principle
holds:
\begin{eqnarray*}
u_i (x,t)= \mathop{\rm inf}_{a_t\in L^\infty([0,t],A)} \E_{x,i} \{
\int_0^t f_{\nu_s} (X_s)ds + u_{\nu_h}(X_h,t-h)
\} \qquad 0<h\leq t.
\end{eqnarray*}
Then the functions $u_i$ satisfy the system
\begin{equation*}
\left\{
  \begin{array}{ll}\displaystyle
\frac{\partial u_i}{\partial t}+ \mathop{\rm sup}_{a\in A}
-\langle b_i(x,a), Du_i\rangle
+\sum_{j\not= i}\g_{ij}(u_i-u_j)= f_i
   & (x,t)\in  \T^N\times (0,+\infty), \\[5pt]
  u_{i}(x,0)=u_{0,i}(x)&x\in \T^N,
  \end{array}
i=1, \cdots m,
\right.
\end{equation*}
which has the form \eqref{HJE} by setting
$F_i(x,p)= \mathop{\rm sup}_{a\in A}
-\langle b_i(x,a), p\rangle$ and $d_{ii}=\sum_{j\not= i} \g_{ij}=1$
and $d_{ij}= -\g_{ij}$ for $j\not=i.$

The assumptions~\eqref{H3}, \eqref{H0} and \eqref{regf}
are clearly satisfied
and~\eqref{H1} holds if the following controllability assumption is satisfied: 
for every $i,$
there exists $r>0$ such that for any $x\in\T^N$, the ball $B(0,r)$ is contained in $\overline{\textrm{co}}\{b_i(x,A)\}$. Moreover,
$\cap_{1\leq i\leq m}\mathcal{D}_i=\T^N$ and $\A=\F.$

Assuming \eqref{aubry}, \eqref{Dcst-irr}, we
obtain that Theorem~\ref{LTB} holds. Roughly speaking, it means that
the optimal strategy is to drive the trajectories towards a point $x^*$ of
$\A$ and then not to move anymore (except maybe a small time before $t$).
This is suggested by the fact that all the $f_i$'s have minimum 0
at $x^*$ and, at such point, the running cost is 0.
Now, if \eqref{aubry} does not hold anymore, things appear to be
more complicated.
We hope to come back to this issue in a future work.

%%%%%%%%%%%%%%%%%%%%%%%%%%%%%%%%%%%%%%%%%%%%%%%%%%%%%%%%%%%%%
%%% AVEC BIBTEX
%\bibliographystyle{plain} 
%\bibliography{biblio} 
%\end{document}

%%%%%%%
%     %
%%%%%%%

\end{document}